\newtheorem{theorem}{Theorem}[section]
\newtheorem{lemma}[theorem]{Lemma}
\newtheorem{proposition}[theorem]{Proposition}
\newtheorem{corollary}[theorem]{Corollary}
\newtheorem{remark}[theorem]{Remark}
\newtheorem{definition}[theorem]{Definition}
\newtheorem{algorithm}[theorem]{Algorithm}
\newcommand{\Mod}[1]{\ (\mathrm{mod}\ #1)}
\title{Two-point AG codes from the Beelen-Montanucci maximal curve}
\author{Leonardo Landi}
\author{Lara Vicino}
\affil{Department of Applied Mathematics and Computer Science, Technical University of Denmark, 2800 Kgs. Lyngby, Denmark lelan@dtu.dk - ORCID 0000-0003-4835-5321\\ lavi@dtu.dk - ORCID 0000-0002-7480-8681}
\date{}
\begin{document}
\maketitle

\begin{abstract}
  In this paper we investigate two-point algebraic-geometry codes (AG codes) coming from the Beelen-Montanucci (BM) maximal curve. We study properties of certain two-point Weierstrass semigroups of the curve and use them for determining a lower bound on the minimum distance of such codes. AG codes with better parameters with respect to comparable two-point codes from the Garcia-G\"{u}neri-Stichtenoth (GGS) curve are discovered.
\end{abstract}

\textbf{Keywords:} AG codes, Beelen-Montanucci curve, order bound, two-point code, two-point Weierstrass semigroup. \\

\textbf{MSC:} 11G20, 11T71, 94B27, 14H50, 14G50

\section{Introduction}
\label{section:introduction}

Algebraic geometry codes, or simply \textit{AG codes}, are a family of error-correcting codes introduced by Goppa in the '80s (see \cite{Goppa}, \cite{GoppaBound}) and constructed using algebraic curves defined over a finite field. AG codes provide good examples of error-correcting codes when compared to other linear codes such as Reed-Solomon codes (RS codes). The basic parameters of an AG code $\mathcal{C}$ are its length $n_{\mathcal{C}}$, its dimension $k_{\mathcal{C}}$ and its minimum distance $d_{\mathcal{C}}$. The minimum distance $d_{\mathcal{C}}$ describes the error-correcting capability of the code, thus it is desirable for applications to construct codes with large minimum distance. A general lower bound for the minimum distance of an AG code is given by the well-known Goppa bound; a consequence of this bound is that for a code whose underlying algebraic curve has genus $g$, the inequality $d_{\mathcal{C}} \geq n_{\mathcal{C}} - k_{\mathcal{C}} + 1 - g$ holds, hence the minimum distance can be designed.

Let $\mathbb{F}_q$ be the finite field with $q$ elements and $\mathcal{X}$ be an algebraic curve defined over $\mathbb{F}_q$ and of genus $g$. $\mathcal{X}$ is said to be \textit{maximal} if it attains the Hasse-Weil bound $|\mathcal{X}(\mathbb{F}_q)| \leq q + 1 + 2g \sqrt{q}$, where $|\mathcal{X}(\mathbb{F}_q)|$ is the number of $\mathbb{F}_q$-rational points of $\mathcal{X}$. In other words, a maximal curve has the largest number of rational points with respect to its genus. For this reason maximal curves are suitable candidates for the construction of AG codes with good parameters. 

An important example of maximal curve over a finite field is the Hermitian curve
\begin{equation}
  \label{eq:Hermitian}
  \mathcal{H}_q : y^{q+1} = x^{q+1} - 1,
\end{equation}
often defined by the equivalent affine equation $y^{q+1} = x^q + x$, which is maximal over $\mathbb{F}_{q^2}$. The Hermitian curve has been intensively studied and, together with the Suzuki and the Ree curves, it forms a family of maximal curves over suitable finite fields called the Deligne-Lusztig curves.

Another important example of maximal curve is the $GK$ curve, constructed by Giulietti and Korchm\'{a}ros in \cite{GKCurve}. The $GK$ curve is defined over $\mathbb{F}_{q^6}$ by the affine equations
$$ GK :
\begin{cases}
  y^{q+1} = x^q + x, \\
  z^{q^2-q+1} = y \frac{x^{q^2}-x}{x^q+x}.
\end{cases} $$
A generalization of the $GK$ curve to an infinite family of $\mathbb{F}_{q^{2n}}$-maximal curves $GGS_n$ for $n \geq 3$ odd has been given by Garcia, G\"{u}neri and Stichtenoth in \cite{GGSCurve}. The $GGS_n$ curves are defined by affine equations
$$ GGS_n :
\begin{cases}
  y^{q+1} = x^q + x, \\
  z^m = y \frac{x^{q^2}-x}{x^q+x},
\end{cases} $$
where $m := (q^n+1)/(q+1)$. The $GK$ curve is a special case of the $GGS_n$ curve for $n=3$.

A different generalization of the $GK$ curve was introduced by Beelen and Montanucci in \cite{ANewFamily}. For a prime power $q$ and an odd integer $n \geq 3$, the Beelen-Montanucci curve $\mathcal{BM}_n$ is defined over $\mathbb{F}_{q^{2n}}$ by the affine equations
\begin{equation}
  \label{eq:BM_definition}
  \mathcal{BM}_n :
  \begin{cases}
    y^{q+1} = x^{q+1} - 1, \\
    z^m = y \frac{x^{q^2}-x}{x^{q+1}-1},
  \end{cases}
\end{equation}
where $m := (q^n+1)/(q+1)$. The curve $\mathcal{BM}_n$ is maximal over the field $\mathbb{F}_{q^{2n}}$ and for $n=3$ it is isomorphic to the $GGS_n$ curve and, equivalently, to the $GK$ curve. Further, $\mathcal{BM}_n$ is isomorphic to $GGS_n$ if and only if $n=3$, as shown in \cite{ANewFamily}.

For dual codes of two-point AG codes, methods that give a lower bound on the minimum distance that is possibly better than the Goppa bound were studied by Matthews in \cite{Matthews} and Beelen in \cite{OrderBound}; both involve a generalization of the Weierstrass semigroup at one point to pairs of points. Over the years, these techniques have been applied to specific maximal curves: for example this is the case of the Hermitian curve in \cite{HommaKim}, \cite{Park} and \cite{DuursmaKirov}, the Suzuki curve in \cite{TwoPointsSuzuki}, the $GK$ curve in \cite{TwoPointsGK} and the $GGS_n$ curves in \cite{TwoPointCodes}.

The aim of this paper is to study duals of two-point AG codes coming from the $\mathcal{BM}_n$ curve. Our approach is similar to the one developed in \cite{TwoPointCodes}, in which the $GGS_n$ curves are considered. In particular, the order bound introduced in \cite{OrderBound} will be used to compute a lower bound on the minimum distance that improves the Goppa bound. In Section \ref{section:preliminary_results}, general results on AG codes will be presented, with a particular focus on two-point AG codes. Section \ref{section:two_point_semigroup} is dedicated to the study of a certain two-point Weierstrass semigroup, as defined in \cite{GeneralizationWeierstrassSemigroup}, on the $\mathcal{BM}_n$ curve. The fourth and last Section is devoted to the computation of the order bound for duals of two-point AG codes coming from the $\mathcal{BM}_n$ curve and includes results for specific values of $q$ and $n$.

\section{Preliminary results}
\label{section:preliminary_results}

We recall some notations and results for two-point AG codes. A more general exposition of AG codes can be found in \cite{Stichtenoth}*{Chapter 2}. Throughout this section, let $q$ be any prime power. For an algebraic curve $\mathcal{X}$ defined over the finite field $\mathbb{F}_q$ of genus $g(\mathcal{X})$, denote with $\mathbb{F}_q(\mathcal{X})$ the field of functions on $\mathcal{X}$. For a fixed positive integer $n$, let $P_1, \dots, P_n$ be rational points of $\mathcal{X}$ and let $D$ be the divisor $D := P_1 + \dots + P_n$. Further, let $G$ be another divisor whose support is disjoint from the support of $D$. The Riemann-Roch space associated to $G$ is the $\mathbb{F}_q$-vector space 
$$ L(G) := \{ f \in \mathbb{F}_q(\mathcal{X}) \mid (f) + G \geq 0 \} \cup \{ 0 \}. $$
The AG code $C_L(D, G)$ is defined as 
$$ C_L(D, G) := \{ (f(P_1), \dots, f(P_n)) \mid f \in L(G) \}. $$
$C_L(D, G)$ is a linear subspace of $\mathbb{F}_q^n$ of dimension $\mathrm{dim}(C_L(D, G)) = \mathrm{dim}(L(G)) - \mathrm{dim}(L(G-D))$ and its minimum distance $d$ satisfies the bound $d \geq n - \mathrm{deg}(G)$. Further, we define $C_L(D, G)^\perp$ as the dual of $C_L(D, G)$; $C_L(D, G)^\perp$ is a linear subspace of $\mathbb{F}_q^n$ of dimension $n - \mathrm{dim}(C_L(D, G))$ and its minimum distance $d^\prime$ satisfies the bound $d^\prime \geq \mathrm{deg}(G) - 2g(\mathcal{X}) + 2$.

The lower bound $d^\prime \geq \mathrm{deg}(G) - 2g(\mathcal{X}) + 2$ is generally not tight and can be improved. To this aim, an approach similar to the one used in \cite{OrderBound} can be used. We write $G = F_1 + F_2$ for some divisors $F_1$ and $F_2$ of $\mathcal{X}$ and we define for a rational point $R$ of $\mathcal{X}$
\begin{align*}
  H(R; G) & := \{ -v_R(f) \mid f \in L(G + \infty R) \setminus \{ 0 \}\}, \\
  N(R; F_1, F_2) & := \{ (i, j) \in H(R; F_1) \times H(R; F_2) \mid i+j = v_R(G)+1 \}, \\
  \nu(R; F_1, F_2) & := \# N(R; F_1, F_2),
\end{align*}
where  $v_R(G)$ denotes the coefficient of $R$ in the divisor $G$ and $L(G + \infty R) := \bigcup_{i\in \mathbb{Z}}L(G + iR)$. $H(R; G)$ is called the set of \textit{$G$-non-gaps at $R$}, studied for example in \cite{ConsecutiveGaps}. Observe that $H(R; G + R) = H(R; G)$. Also, observe that $H(R; 0)$ is the Weierstrass semigroup $H(R)$ at $R$. 

A convenient choice for the divisors $F_1$ and $F_2$ that we will keep for the rest of the paper is $F_1 = 0$ and $F_2 = G$. With these definitions in place, we recall now the generalized order bound introduced in \cite{OrderBound}*{Section 2}, to which we refer for a detailed discussion.

\begin{definition}[\cite{OrderBound}*{Definition 6}]
  \label{def:orderbound}
  Let $D$ be a divisor that is a sum of $n$ distinct rational points of $\mathcal{X}$ and $G$ a divisor on $\mathcal{X}$ such that all the points in its support are rational. Further, suppose that the support of $G$ is disjoint from the support of $D$. For any infinite sequence $S = R_1, R_2,\ldots$ of points of $\mathcal{X}\setminus \mathrm{supp}(D)$ define
  $$ d_S(G) := \mathrm{min}\{\nu(R_{i+1}; 0, G + R_1 + \cdots + R_i)\}, $$
  where the minimum is taken over all $i\geq 0$ such that $L(G + R_1 + \cdots + R_i) \neq L(G + R_1 + \cdots + R_{i+1})$. Moreover, define 
  $$ d(G):=\mathrm{max} \ d_S(G), $$
  where the maximum is taken over all infinite sequences $S$ of points having entries in $\mathcal{X}\setminus \mathrm{supp}(D)$.
\end{definition}

From \cite{OrderBound}*{Theorem 7}, the following proposition holds.

\begin{proposition}
  \label{prop:orderbound}
  Let $C_L(D, G)$ be an AG code with $D$ and $G$ as in Definition \ref{def:orderbound}. Then the minimum distance $d^\prime$ of the dual code $C_L(D, G)^\perp$ satisfies the inequality $d^\prime \geq d(G)$.
\end{proposition}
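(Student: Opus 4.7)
The plan is to adapt Beelen's order-bound argument \cite{OrderBound} to the general sequence setting of Definition \ref{def:orderbound}. I aim to establish the stronger statement that for every nonzero $c \in C_L(D,G)^\perp$ and every infinite sequence $S = R_1, R_2, \ldots$ in $\mathcal{X} \setminus \mathrm{supp}(D)$, one has $\mathrm{wt}(c) \geq d_S(G)$; maximizing over $S$ then yields $\mathrm{wt}(c) \geq d(G)$ for every nonzero $c$, whence $d^\prime \geq d(G)$.

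First I would locate a breakpoint for $c$ along the sequence. Set $G_i := G + R_1 + \cdots + R_i$ with $G_0 := G$; the codes $C_L(D, G_i)$ form an ascending chain, so their duals descend, and since $c$ is nonzero while the duals eventually become trivial there is a smallest $i \geq 0$ with $c \notin C_L(D, G_{i+1})^\perp$. This forces $L(G_i) \neq L(G_{i+1})$, so $i$ is one of the indices over which the minimum in $d_S(G)$ is taken. Write $R := R_{i+1}$ and $\nu := \nu(R; 0, G_i)$, and for each $a \in H(R)$, respectively each $b \in H(R; G_i)$, pick $f_a \in L(\infty R)$ with $v_R(f_a) = -a$ and $g_b \in L(G_i + \infty R)$ with $v_R(g_b) = -b$.

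The heart of the proof is the analysis of the syndrome matrix $M_{a,b} := \langle c, \mathrm{ev}(f_a g_b) \rangle$, where $\mathrm{ev}(h) := (h(P_1), \ldots, h(P_n))$. The function $f_a g_b$ has pole of order exactly $a+b$ at $R$ and is bounded by $G_i$ elsewhere, so whenever $a+b \leq v_R(G_i)$ one has $f_a g_b \in L(G_i)$ and $M_{a,b} = 0$ by orthogonality of $c$. When $a+b = v_R(G_i)+1 = v_R(G_{i+1})$, the product $f_a g_b$ lies in $L(G_{i+1}) \setminus L(G_i)$; since this quotient is one-dimensional by Riemann--Roch, fixing a generator $\phi$ and writing $f_a g_b = \alpha_{a,b}\, \phi + h$ with $h \in L(G_i)$ gives $M_{a,b} = \alpha_{a,b}\, s^*$ with $\alpha_{a,b} \neq 0$ (because $v_R(f_a g_b) = v_R(\phi)$) and $s^* := \langle c, \mathrm{ev}(\phi)\rangle \neq 0$ by minimality of $i$. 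Enumerating $N(R; 0, G_i) = \{(a_1, b_1), \ldots, (a_\nu, b_\nu)\}$ with $a_1 < \cdots < a_\nu$ (so $b_1 > \cdots > b_\nu$), the $\nu \times \nu$ submatrix of $M$ on these indices is lower-triangular with nonzero diagonal: the strictly upper-triangular entries are pairs $(a_k, b_l)$ with $l > k$ satisfying $a_k + b_l < v_R(G_i)+1$, hence vanish by the previous case. Therefore $\mathrm{rank}(M) \geq \nu$.

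The conclusion is the standard weight estimate: since $M_{a,b} = \sum_{k \in \mathrm{supp}(c)} c_k f_a(P_k) g_b(P_k)$ realizes $M$ as a sum of $\mathrm{wt}(c)$ rank-one matrices, one has $\mathrm{rank}(M) \leq \mathrm{wt}(c)$, and combining the two bounds gives $\mathrm{wt}(c) \geq \nu \geq d_S(G)$ as required. I expect the main obstacle to be the careful verification of the triangular structure in the third paragraph, in particular the uniformity of the scalar $s^*$ across the diagonal and the identification of the anti-diagonal of $N(R; 0, G_i)$ with the diagonal of the reindexed submatrix; the rest of the argument is bookkeeping along the sequence $S$.
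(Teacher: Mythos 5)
Your argument is correct and is essentially the proof of the result the paper invokes here: the paper gives no proof of its own but cites \cite{OrderBound}*{Theorem 7}, and your breakpoint-plus-syndrome-matrix argument (vanishing below the anti-diagonal, nonzero constant multiple $s^\ast$ on it, rank bounded by $\mathrm{wt}(c)$) is the standard Feng--Rao-type proof of that theorem. No gaps; the points you flag as delicate (uniformity of $s^\ast$ via the one-dimensionality of $L(G_{i+1})/L(G_i)$, and the triangular structure of the submatrix indexed by $N(R;0,G_i)$) are handled correctly.
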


By virtue of Proposition \ref{prop:orderbound} we will refer to the quantity $d(G)$ in Definition \ref{def:orderbound} as the generalized order bound for the minimum distance of $C_L(D, G)^\perp$ or, simply, the \textit{order bound}.

\begin{lemma}
  \label{lem:4g_minus_1}
  Let $D$ and $G$ as in Definition \ref{def:orderbound}. Let $g := g(\mathcal{X})$. Then $d(G)\geq \mathrm{deg}(G)-2g+2$. 
  If $\mathrm{deg}(G) \geq 4g - 1$, the equality $d(G) = \mathrm{deg}(G) - 2g + 2$ holds.
\end{lemma}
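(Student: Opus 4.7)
The plan is to establish a key identity expressing $\nu(R;0,G')$ in terms of Riemann--Roch dimensions, from which both the lower bound and the equality follow. For any divisor $G'$ and rational point $R$, I would derive the identity
\begin{equation*}
\nu(R;0,G') \;=\; \sum_{a \in H(R)} \delta_a, \qquad \delta_a := \mathrm{dim}(L(G'+(1-a)R)) - \mathrm{dim}(L(G'-aR)) \in \{0,1\},
\end{equation*}
by substituting $b = v_R(G')+b'$ in the definition of $\nu$ (noting that $b \in H(R;G')$ iff $b'$ belongs to the ``dimension-jump'' set $H'(R;G') := \{k : \mathrm{dim}(L(G'+kR)) > \mathrm{dim}(L(G'+(k-1)R))\}$) and rewriting the condition $a+b = v_R(G')+1$ as $b' = 1-a$. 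A telescoping argument then gives $\sum_{a\geq 0}\delta_a = \mathrm{dim}(L(G'+R)) \geq \mathrm{deg}(G')+2-g$ by Riemann--Roch.

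For the lower bound $d(G)\geq \mathrm{deg}(G)-2g+2$: since the Weierstrass semigroup $H(R)$ has exactly $g$ gaps, each contributing at most $1$ to $\sum\delta_a$, I would obtain
\begin{equation*}
\nu(R;0,G') \;\geq\; \mathrm{dim}(L(G'+R)) - g \;\geq\; \mathrm{deg}(G')+2-2g \;\geq\; \mathrm{deg}(G)+2-2g,
\end{equation*}
the last step using $\mathrm{deg}(G') = \mathrm{deg}(G)+i \geq \mathrm{deg}(G)$. Since this holds uniformly for every index $i$ and every $R_{i+1}$, passing to $d(G) = \max_S \min_i \nu$ yields the claim.

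For the equality when $\mathrm{deg}(G)\geq 4g-1$: I would show the bound is tight at $i=0$ for every sequence $S$. Under this hypothesis, for every $a \in [1, 2g-1]$ both divisors $G+(1-a)R_1$ and $G-aR_1$ have degree at least $2g-1$ and are therefore non-special, so Riemann--Roch gives $\delta_a = 1$. In particular each of the $g$ gaps of $H(R_1)$ contributes $1$, so $\nu(R_1;0,G) = \mathrm{deg}(G)+2-2g$ regardless of $R_1$. Hence $d_S(G) \leq \mathrm{deg}(G)+2-2g$ for every $S$, and combined with the lower bound this gives equality. The main technical obstacle will be verifying the substitution and telescoping identity carefully; once these are in place, the rest reduces to routine Riemann--Roch bookkeeping, with cases where $\mathrm{deg}(G') < 2g-2$ handled using the unconditional Riemann--Roch inequality.
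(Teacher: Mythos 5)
Your proof is correct. The paper's own proof is much shorter only because it outsources both halves to Beelen's order-bound paper: the inequality $d(G)\geq \mathrm{deg}(G)-2g+2$ is quoted from its Proposition 10, and the identity $\nu(R_{i+1};0,G+R_1+\cdots+R_i)=\mathrm{deg}(G+R_1+\cdots+R_i)-2g+2$ for divisors of degree at least $2g-1$ is quoted from its Remark 5. What you have written is essentially a correct self-contained reconstruction of those two cited facts: the substitution $b=v_R(G')+b'$ identifying $H(R;G')$ with the dimension-jump set, the resulting identity $\nu(R;0,G')=\sum_{a\in H(R)}\delta_a$, the telescoping evaluation $\sum_{a\geq 0}\delta_a=\dim L(G'+R)$, and the count of exactly $g$ gaps are precisely the ingredients behind them; your Riemann--Roch computation showing $\delta_a=1$ for every gap $a\in[1,2g-1]$ when $\mathrm{deg}(G)\geq 4g-1$ checks out ($\mathrm{deg}(G-aR_1)\geq 2g$ and $\mathrm{deg}(G+(1-a)R_1)\geq 2g+1$, both non-special). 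The logical skeleton of your equality argument (evaluate $\nu$ at $i=0$, note it is independent of $R_1$ and of $S$, combine with the lower bound) coincides with the paper's. One small point to make explicit: to conclude $d_S(G)\leq\nu(R_1;0,G)$ you need $i=0$ to be among the indices over which the minimum defining $d_S(G)$ is taken, i.e. $L(G)\neq L(G+R_1)$; this is immediate because $\mathrm{deg}(G)\geq 2g-1$ makes both $G$ and $G+R_1$ non-special, so their dimensions differ by exactly one. With that one line added, your argument is complete.
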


\begin{proof}
  The inequality $d(G) \geq \mathrm{deg}(G)-2g+2$ follows directly from \cite{OrderBound}*{Proposition 10}. Assume $\mathrm{deg}(G) \geq 4g - 1$. 
  Fix a sequence $S = R_1, R_2, \dots$ of points of $\mathcal{X} \setminus \mathrm{supp}(D)$. As in particular $\mathrm{deg}(G) \geq 2g - 1$, the Riemann-Roch theorem implies that $L(G + R_1 + \cdots + R_i) \neq L(G + R_1 + \cdots + R_{i+1})$ for all $i \geq 0$. Moreover, it follows from \cite{OrderBound}*{Remark 5} that $\nu(R_{i+1}; 0, G + R_1 + \cdots + R_i) = \mathrm{deg}(G + R_1 + \cdots + R_{i}) - 2g + 2$ for all $i \geq 0$. As a consequence,
  $$ d_S(G) = \nu(R_1; 0, G) = \mathrm{deg}(G) - 2g + 2. $$
  Since $d_S(G)$ does not depend on the chosen sequence $S$, the conclusion follows.
\end{proof}
Lemma \ref{lem:4g_minus_1} shows that the order bound $d(G)$ coincides with the Goppa bound if $G$ has degree larger than or equal to $4g-1$. At the same time, the order bound cannot be worse than the Goppa bound for $\mathrm{deg}(G) < 4g-1$.

\begin{remark}
  \label{remark:restriction}
  Though the order bound $d(G)$ can be obtained theoretically by considering all possible sequences of points that do not occur in the support of $D$, this is not feasible in practice unless we restrict the set of possible sequences to a finite set. A first step into this direction is to observe that the computation of $d(G)$ using Definition \ref{def:orderbound} is only needed when $\mathrm{deg}(G) < 4g-1$ (see Lemma \ref{lem:4g_minus_1}) and that, in this case, only the first $4g-1-\mathrm{deg}(G)$ entries from every sequence $S$ are relevant to define $d(G)$. However, some additional condition must be imposed: for example, at the cost of obtaining a possibly worse bound, one restricts the choice of the points that can occur in a sequence $S$ to a finite set of points $\mathcal{P}$, chosen beforehand. For practical convenience, the set $\mathcal{P}$ can be chosen as the set of rational points of $\mathcal{X}$ that are not in the support of $D$.
\end{remark}

Throughout the rest of the paper we will apply the restriction suggested in Remark \ref{remark:restriction} when needed for practical purpose. With slight abuse of notation we will continue denoting the bound with $d(G)$ and we will refer to it as the \textit{order bound}. Note that this choice does not affect the statements of Proposition \ref{prop:orderbound} and Lemma \ref{lem:4g_minus_1}.

In Section \ref{section:computation} we will use Proposition \ref{prop:orderbound} to obtain a lower bound on the minimum distance of duals of two-point AG codes. A two-point AG code is an AG code $C_L(D, G)$ such that the support of the divisor $G$ consists of two distinct points $Q$ and $P$ only, namely $G = aQ + bP$ for some $a, b \in \mathbb{Z}$. 

\begin{remark}
  The dual of the two-point code $C_L(D, G)$ is not necessarily a two-point code; in fact, it is known that $C_L(D, G)^\perp = C_L(D, H)$ for some divisor $H$ whose support is disjoint from the support of $D$, but the support of $H$ might consist of more than two points. See \cite{Stichtenoth}*{Proposition 2.2.10} for more details.
\end{remark}

For a two-point code $C_L(D, G)$, we typically choose $Q$ and $P$ to be rational points of $\mathcal{X}$ and $D$ to be the sum of all rational points of $\mathcal{X}$ different from $Q$ and $P$. In such case, to the aim of computing the order bound $d(G)$ on the minimum distance of the code $C_L(D, G)^\perp$, one can consider sequences $S = R_1, R_2, \dots$ with $R_i \in \{ Q, P \}$ for all $i \geq 1$. In particular
\begin{alignat*}{2}
  \nu(Q; 0, aQ + bP) & = \# \{ (i, j) \in H(Q) \times H(Q; bP) \mid i+j = a+1 \}, \\
  \nu(P; 0, aQ + bP) & = \# \{ (i, j) \in H(P) \times H(P; aQ) \mid i+j = b+1 \}.
\end{alignat*}
We conclude this section with an exposition of a techinque, also used in \cite{TwoPointCodes}, for conveniently computing the sets $H(Q; bP)$ and $H(P; aQ)$. Denote with $\mathcal{R}(Q, P)$ the ring of functions in $\mathbb{F}_q(\mathcal{X})$ that are regular outside $Q$ and $P$, namely
$$ \mathcal{R}(Q, P) := \{ f \in \mathbb{F}_q(\mathcal{X}) \mid v_R(f) \geq 0 \; \forall R \neq Q, P \}. $$
The two-point Weierstrass semigroup $H(Q, P)$ can be defined as the set
$$ H(Q, P) := \{ (i, j) \in \mathbb{Z}^2 \mid \exists f \in \mathcal{R}(Q, P) \setminus \{0\}, v_Q(f) = -i, v_P(f) = -j \}. $$
This generalization of the classical Weierstrass semigroup at one point has been studied for example in \cite{GeneralizationWeierstrassSemigroup}. We define the \textit{period} of the two-point Weierstrass semigroup $H(Q, P)$ as
$$ \pi := \min \{ k \in \mathbb{N} \setminus \{ 0 \} \mid k(Q - P) \; \text{is a principal divisor} \} $$
and we define the map
\begin{alignat*}{2}
  \tau_{Q, P} : \; & \mathbb{Z} && \longrightarrow \mathbb{Z} \\
  & i && \longmapsto \min \{ j \mid (i, j) \in H(Q, P) \}.
\end{alignat*}
Some of the properties of the map $\tau_{Q, P}$ are summarized in the following proposition. See \cite{GeneralizationWeierstrassSemigroup}*{Proposition 14, Proposition 17} for details.

\begin{proposition}
  \label{prop:properties_tau}
  Let $\pi$ be the period of the two-point Weierstrass semigroup $H(Q, P)$ and $g = g(\mathcal{X})$ be the genus of $\mathcal{X}$. Then:
  \begin{itemize}
  \item [a)] $\tau_{Q, P}$ is bijective, with inverse map $\tau^{-1}_{Q, P} = \tau_{P, Q}$;
  \item [b)] $-i \leq \tau_{Q, P}(i) \leq 2g - i$ for all $i \in \mathbb{Z}$;
  \item [c)] $\tau_{Q, P}(i + \pi) = \tau_{Q, P}(i) - \pi$;
  \item [d)] $\sum_{i=c}^{\pi+c-1} (i + \tau_{Q, P}(i)) = \pi g$ for all $c \in \mathbb{Z}$.
  \end{itemize}
\end{proposition}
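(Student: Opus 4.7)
The plan is to establish the four properties in order, relying on the degree-zero property of principal divisors, the Riemann-Roch theorem, and a distinguished function realizing the period $\pi$.

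For part (a), given $\tau_{Q,P}(i)=j$ with witness $f\in\mathcal{R}(Q,P)$, the same $f$ used with the roles of $Q$ and $P$ swapped shows $(j,i)\in H(P,Q)$, hence $\tau_{P,Q}(j)\le i$. For the reverse inequality I argue by contradiction: if $\tau_{P,Q}(j)=i'<i$ with witness $g$, then $v_Q(f)=-i<-i'=v_Q(g)$, so an $\mathbb{F}_q$-linear combination $f-\alpha g$ with $\alpha$ chosen to cancel the leading $P$-pole of $f$ yields a nonzero element of $\mathcal{R}(Q,P)$ whose $P$-pole order is strictly less than $j$ while its $Q$-pole order remains exactly $i$, contradicting the minimality in the definition of $\tau_{Q,P}(i)$.

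For (b), the lower bound follows from $\deg(f)=0$ together with $v_R(f)\ge 0$ for $R\ne Q,P$. For the upper bound, Riemann-Roch applied to $iQ+(2g-i)P$ and $(i-1)Q+(2g-i)P$ gives dimensions $g+1$ and $g$ respectively (the $\ell(K-\cdot)$ term vanishes because both divisors have degree at least $2g-1$), so a function in the one-dimensional gap has $v_Q=-i$ exactly and $v_P\ge -(2g-i)$, placing the corresponding pair in $H(Q,P)$. For part (c), the definition of $\pi$ furnishes $h\in\mathbb{F}_q(\mathcal{X})^{\times}$ with $(h)=\pi Q-\pi P$; multiplying a witness of $(i,\tau_{Q,P}(i))$ by $h^{-1}$ produces a witness of $(i+\pi,\tau_{Q,P}(i)-\pi)$, giving one inequality, and multiplying a witness of $(i+\pi,\tau_{Q,P}(i+\pi))$ by $h$ gives the reverse.

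Part (d) is the main obstacle, as the value $\pi g$ must be extracted from a global invariant of the curve. From (c), the summand $i+\tau_{Q,P}(i)$ is itself $\pi$-periodic, so the sum over any window of length $\pi$ is independent of $c$, reducing the task to evaluating a single well-chosen window. My plan is to count the pairs $(i,j)\in H(Q,P)$ lying in a rectangle $\{i\le A,\ j\le B\}$ in two ways: on one hand, a standard staircase argument identifies this count with $\dim L(AQ+BP)=A+B-g+1$ when $A+B\ge 2g-1$; on the other, using the bijection $\tau_{Q,P}$ from (a) together with the $\pi$-periodicity from (c), the counted pairs partition along residue classes of $i$ modulo $\pi$, and telescoping the boundary contributions isolates $\sum_{i=c}^{c+\pi-1}(i+\tau_{Q,P}(i))$. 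Equating the two expressions yields the value $\pi g$. The delicate point is making the staircase identification rigorous, in particular handling the boundary rows and columns where $i$ or $j$ is negative so that no pairs are miscounted.
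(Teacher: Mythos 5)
Your arguments for parts a), b) and c) are correct and complete: the leading-coefficient cancellation at $P$ proves $\tau_{P,Q}(\tau_{Q,P}(i))=i$ (and by symmetry the other composition), the degree-zero and Riemann--Roch arguments give the two bounds in b) (which also show $\tau_{Q,P}$ is well defined), and multiplication by $h^{\pm 1}$ with $(h)=\pi(Q-P)$ gives c). Note that the paper itself offers no proof of this proposition --- it is quoted from Beelen--Tutas \cite{GeneralizationWeierstrassSemigroup}*{Propositions 14, 17} --- so there is no in-paper argument to compare against; your write-up is judged on its own.

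Part d), however, is only a plan, and the one concrete identification it rests on is false as stated: the number of pairs $(i,j)\in H(Q,P)$ with $i\leq A$, $j\leq B$ is \emph{not} $\dim L(AQ+BP)$. Already for $\mathbb{P}^1$ with $Q=0$, $P=\infty$ one has $H(Q,P)=\{(i,j): i+j\geq 0\}$, so the rectangle count grows quadratically in $A+B$ while $\dim L(AQ+BP)=A+B+1$. The quantity that equals $\dim L(AQ+BP)$ is $\#\{i\leq A : \tau_{Q,P}(i)\leq B\}$, i.e.\ the number of points of the \emph{graph} of $\tau_{Q,P}$ in the rectangle (this is Theorem \ref{theorem:dim_LG}, obtained by telescoping $\dim L(iQ+BP)-\dim L((i-1)Q+BP)$ over $i\leq A$). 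With this correction your residue-class plan does close: for $i=r+k\pi$ in a fixed class $r$, property c) gives $\tau_{Q,P}(i)=\tau_{Q,P}(r)-k\pi$, so the admissible $k$ number $\tfrac{1}{\pi}\bigl(A+B-r-\tau_{Q,P}(r)-((A-r)\bmod\pi)-((B-\tau_{Q,P}(r))\bmod\pi)\bigr)+1$; summing over a complete residue system and using that both $A-r$ and $B-\tau_{Q,P}(r)$ run over complete residue systems (the latter by a) and c)) yields $A+B+1-\tfrac{1}{\pi}\sum_{r}(r+\tau_{Q,P}(r))$, and comparison with $A+B+1-g$ gives d). A cleaner variant double counts the $\mathbb{Z}$-invariant set $S=\{(i,j): i+j\geq 0,\ j<\tau_{Q,P}(i)\}$: column $i$ contributes $i+\tau_{Q,P}(i)$ points, row $j$ contributes exactly $g$ points (the $g$ gaps of the one-parameter family $L(iQ+jP)$ in $i\geq -j$), and comparing the two fundamental domains for $(i,j)\mapsto(i+\pi,j-\pi)$ gives $\sum_{i=c}^{c+\pi-1}(i+\tau_{Q,P}(i))=\pi g$ directly. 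As written, though, d) is not yet a proof.
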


For computational purposes, it is convenient to provide a method to describe the map $\tau_{Q, P}^{-1} = \tau_{P, Q}$. With the following proposition we show that $\tau_{Q, P}^{-1}(j)$ can be computed efficiently for all $j \in \mathbb{Z}$.

\begin{proposition}
  \label{prop:tau_inverse}
  Let $\pi$ be the period of the two-point Weierstrass semigroup $H(Q, P)$. Let $j \in \mathbb{Z}$ and $i := \tau_{Q, P}^{-1}(j)$. Then $i = i^\prime - j + \tau_{Q, P}(i^\prime)$, where $i^\prime$ is the unique integer in $\{ 0, \dots, \pi-1 \}$ such that $\tau_{Q, P}(i^\prime) \equiv j \Mod{\pi}$.
\end{proposition}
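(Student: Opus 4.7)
The plan is to exploit the quasi-periodicity relation in Proposition \ref{prop:properties_tau}(c), namely $\tau_{Q,P}(i+\pi) = \tau_{Q,P}(i)-\pi$, which implies $\tau_{Q,P}(i+\pi) \equiv \tau_{Q,P}(i) \pmod{\pi}$. Hence $\tau_{Q,P}$ descends to a well-defined map $\bar\tau \colon \mathbb{Z}/\pi\mathbb{Z} \to \mathbb{Z}/\pi\mathbb{Z}$.

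First I would establish that $\bar\tau$ is a bijection: since $\tau_{Q,P}$ is a bijection on $\mathbb{Z}$ by Proposition \ref{prop:properties_tau}(a) and its fibers over any residue class mod $\pi$ are translated by $\pi$ under (c), injectivity of $\bar\tau$ is automatic, and surjectivity follows by counting. In particular, for every $j \in \mathbb{Z}$ there is a unique $i' \in \{0, \dots, \pi-1\}$ with $\tau_{Q,P}(i') \equiv j \pmod{\pi}$, which justifies the existence and uniqueness of $i'$ in the statement.

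Next, set $k := (\tau_{Q,P}(i') - j)/\pi \in \mathbb{Z}$, so that $\tau_{Q,P}(i') = j + k\pi$. Iterating Proposition \ref{prop:properties_tau}(c) gives $\tau_{Q,P}(i' + k\pi) = \tau_{Q,P}(i') - k\pi = j$. Applying $\tau_{Q,P}^{-1}$ yields
$$ i = \tau_{Q,P}^{-1}(j) = i' + k\pi = i' + \bigl(\tau_{Q,P}(i') - j\bigr) = i' - j + \tau_{Q,P}(i'), $$
which is precisely the claimed identity.

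The only non-trivial point is the bijectivity of the induced map $\bar\tau$, and this is essentially forced by (a) together with (c); no deeper input is required. Note that this proposition is computationally appealing because it reduces the evaluation of $\tau_{Q,P}^{-1}$ at any integer $j$ to the knowledge of $\tau_{Q,P}$ on the finite set $\{0, \dots, \pi-1\}$, which is precisely the fundamental domain dictated by the period $\pi$.
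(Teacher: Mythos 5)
Your proposal is correct and follows essentially the same route as the paper: both arguments use Proposition \ref{prop:properties_tau} (a) and (c) to show that $\{\tau_{Q,P}(a) \mid 0 \leq a < \pi\}$ is a complete residue system modulo $\pi$ (your induced bijection $\bar\tau$ is just a repackaging of this), and then obtain the identity by shifting the argument of $\tau_{Q,P}$ by the appropriate multiple of $\pi$ and applying $\tau_{Q,P}^{-1}$. The only cosmetic difference is that you introduce $k := (\tau_{Q,P}(i^\prime)-j)/\pi$ explicitly, whereas the paper works directly with the multiple $j - \tau_{Q,P}(i^\prime)$.
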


\begin{proof}
  From Proposition \ref{prop:properties_tau}, $\tau_{Q, P}(a + \pi) = \tau_{Q, P}(a) - \pi$ for all $a \in \mathbb{Z}$ and $\tau_{Q, P}$ is bijective; thus $\{ \tau_{Q, P}(a) \mid 0 \leq a < \pi \}$ is a complete set of representatives of congruence classes modulo $\pi$. In particular, there exists a unique $i^\prime \in \{ 0, \dots, \pi-1 \}$ such that $\tau_{Q, P}(i^\prime) \equiv j \Mod{\pi}$. Write $\tau_{Q, P}(i) = j = \tau_{Q, P}(i^\prime) + (j - \tau_{Q, P}(i^\prime))$. Then
  \begin{equation}
    \label{eq:tau_inverse}
    \tau_{Q, P}(i^\prime) = \tau_{Q, P}(i) - (j - \tau_{Q, P}(i^\prime)) = \tau_{Q, P}(i + (j - \tau_{Q, P}(i^\prime))),
  \end{equation}
  where the last equality follows from Proposition \ref{prop:properties_tau} (c) as $j - \tau_{Q, P}(i^\prime)$ is a multiple of $\pi$. Applying $\tau_{Q, P}^{-1}$ to the left and the right side of equation \eqref{eq:tau_inverse}, we get $i^\prime = i + j - \tau_{Q, P}(i^\prime)$. The conclusion follows.
\end{proof}

We recall two useful results that rely on the knowledge of the function $\tau_{Q, P}$ only. The first one allows the determination of the dimension of the Riemann-Roch space of a two-point divisor $G=aQ + bP$, $a, b\in \mathbb{N}$. The second one provides an explicit expression for the set of $G$-non-gaps at $Q$ and the set of $G$-non-gaps at $P$. These results are crucial for the computation of the order bound.

\begin{theorem}[\cite{TwoPointCodes}*{Theorem 9}]
  \label{theorem:dim_LG}
  Let $G = aQ + bP$ with $a, b \in \mathbb{N}$. The Riemann-Roch space $L(G)$ has dimension $|\{i \leq a \mid \tau_{Q, P}(i) \leq b \}|$.
\end{theorem}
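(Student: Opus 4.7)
The plan is to produce an explicit basis of $L(G)$ indexed by the set $I := \{\,i \leq a \mid \tau_{Q,P}(i) \leq b\,\}$, thereby showing $\dim L(G) = |I|$ directly. The starting observation is that every $f \in L(G)$ lies in $\mathcal{R}(Q,P)$, because $(f) + G \geq 0$ forces $v_R(f) \geq 0$ for every $R \notin \{Q,P\} = \mathrm{supp}(G)$. Thus the pair $(-v_Q(f), -v_P(f))$ belongs to $H(Q,P)$, with the extra constraints $-v_Q(f) \leq a$ and $-v_P(f) \leq b$.

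First I would construct the candidate basis. For every $i \in I$, the definition of $\tau_{Q,P}(i)$ furnishes a function $f_i \in \mathcal{R}(Q,P) \setminus \{0\}$ with $v_Q(f_i) = -i$ and $v_P(f_i) = -\tau_{Q,P}(i)$. The inequalities $i \leq a$ and $\tau_{Q,P}(i) \leq b$ translate exactly into $(f_i) + aQ + bP \geq 0$, so $f_i \in L(G)$. The family $\{f_i\}_{i \in I}$ is linearly independent because the integers $-v_Q(f_i) = -i$ are pairwise distinct, and any nontrivial linear combination would inherit the smallest (i.e.\ most negative) valuation at $Q$ among its summands.

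The key step is to show these functions span $L(G)$, by induction on $-v_Q$. Given $f \in L(G) \setminus \{0\}$, set $i_0 := -v_Q(f) \leq a$; since $(i_0, -v_P(f)) \in H(Q,P)$ and $-v_P(f) \leq b$, the minimality in the definition of $\tau_{Q,P}$ gives $\tau_{Q,P}(i_0) \leq -v_P(f) \leq b$, so $i_0 \in I$. Choose $c \in \mathbb{F}_q$ (via the leading coefficient of $f$ versus that of $f_{i_0}$ in a local uniformizer at $Q$) so that $f - c f_{i_0}$ has $v_Q$ strictly greater than $-i_0$. One checks that $f - c f_{i_0}$ remains in $L(G)$, using $v_P(f_{i_0}) = -\tau_{Q,P}(i_0) \geq -b$. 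The main subtlety is termination: Proposition~\ref{prop:properties_tau}(b) gives $\tau_{Q,P}(i) \geq 2g - i$, so any $i \in I$ satisfies $i \geq 2g - b$, hence $I$ is finite. Since each reduction strictly decreases $-v_Q$ while staying inside $I \cup \{-\infty\}$, the process must terminate at $0$, exhibiting $f$ as an $\mathbb{F}_q$-linear combination of the $f_i$. This delivers $L(G) = \mathrm{span}\{f_i : i \in I\}$ and the claimed dimension formula.

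The only delicate point, beyond bookkeeping, is justifying the termination of the reduction, which is what forces one to invoke part (b) of Proposition~\ref{prop:properties_tau} rather than just the bijectivity and semigroup structure. Everything else is the standard leading-term argument adapted to the two-point setting.
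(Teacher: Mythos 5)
The paper does not prove this statement: it is quoted verbatim from \cite{TwoPointCodes}*{Theorem 9}, so there is no internal proof to compare against. Your argument is the standard one for such dimension formulas (an explicit basis $\{f_i\}_{i\in I}$ realizing the pairs $(i,\tau_{Q,P}(i))$, linear independence from distinct pole orders at $Q$, and spanning by leading-term reduction), and it is essentially the same telescoping-filtration idea underlying the cited proof; the structure is sound and complete.

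One correction: you misquote Proposition \ref{prop:properties_tau}(b), which states $-i \leq \tau_{Q,P}(i) \leq 2g-i$, not $\tau_{Q,P}(i) \geq 2g-i$. Your claimed consequence $i \geq 2g-b$ for $i \in I$ is false in general (for instance $i=0$ lies in $I$ whenever $a,b\geq 0$, yet $0 < 2g-b$ when $b<2g$). The correct lower bound $\tau_{Q,P}(i)\geq -i$ combined with $\tau_{Q,P}(i)\leq b$ gives $i \geq -b$, so $I \subseteq \{-b,\dots,a\}$ is still finite and your termination argument goes through unchanged after this substitution.
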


\begin{corollary}[\cite{TwoPointCodes}*{Corollary 10}]
  \label{cor:tau-non-gaps}
  Let $G = aQ + bP$ with $a, b \in \mathbb{N}$. Then $H(Q; G) = \{ i \in \mathbb{Z} \mid \tau_{Q, P}(i) \leq b \}$ and $H(P; G) = \{ i \in \mathbb{Z} \mid \tau_{Q, P}^{-1}(i) \leq a \}$.
\end{corollary}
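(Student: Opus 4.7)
The plan is to derive both equalities directly from the definitions. First I would observe that, by definition, $L(G+\infty Q) = \bigcup_{i \in \mathbb{Z}} L((a+i)Q + bP)$. A nonzero function $f$ belongs to this union if and only if $v_R(f) \geq 0$ for every rational point $R \neq Q, P$ and $v_P(f) \geq -b$, while $v_Q(f)$ is allowed to be an arbitrary integer; in other words, $f \in \mathcal{R}(Q, P) \setminus \{0\}$ with $v_P(f) \geq -b$. Consequently, $i \in H(Q; G)$ precisely when there exists $f \in \mathcal{R}(Q, P) \setminus \{0\}$ with $v_Q(f) = -i$ and $v_P(f) \geq -b$.

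Next, I would translate this statement into the language of the two-point Weierstrass semigroup. By the definition of $H(Q, P)$, the existence of such an $f$ is equivalent to the existence of an integer $j \leq b$ with $(i, j) \in H(Q, P)$. Since $\tau_{Q, P}(i) = \min\{j \mid (i, j) \in H(Q, P)\}$ is the minimum of the set of second coordinates attained at $i$ (nonemptiness and finiteness of this minimum are guaranteed by part (b) of Proposition \ref{prop:properties_tau}), the existence of some such $j \leq b$ is equivalent to $\tau_{Q, P}(i) \leq b$. This yields $H(Q; G) = \{i \in \mathbb{Z} \mid \tau_{Q, P}(i) \leq b\}$.

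For the second equality I would proceed symmetrically. Repeating the unfolding with the roles of $Q$ and $P$ exchanged, $j \in H(P; G)$ if and only if there exists an integer $i \leq a$ with $(i, j) \in H(Q, P)$. The smallest such $i$ is, by definition, $\tau_{P, Q}(j)$, so the condition becomes $\tau_{P, Q}(j) \leq a$. Invoking part (a) of Proposition \ref{prop:properties_tau}, which states $\tau_{P, Q} = \tau_{Q, P}^{-1}$, the claim $H(P; G) = \{j \in \mathbb{Z} \mid \tau_{Q, P}^{-1}(j) \leq a\}$ follows.

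There is no real obstacle here: once the definitions of $L(G + \infty Q)$, $\mathcal{R}(Q, P)$, $H(Q, P)$, and $\tau_{Q, P}$ are lined up correctly, the statement reduces to a short chain of equivalences. The only point that requires minor care is the direction of the inequalities and the fact that the minima defining $\tau_{Q, P}$ and $\tau_{P, Q}$ are attained, which is ensured by the bounds provided in Proposition \ref{prop:properties_tau}(b).
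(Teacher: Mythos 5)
Your argument is correct. Note that the paper itself gives no proof of this corollary: it is imported verbatim from the reference, where it is derived as a consequence of the dimension formula of Theorem \ref{theorem:dim_LG}, i.e.\ one observes that $i \in H(Q;G)$ exactly when $\dim L(iQ+bP) \neq \dim L((i-1)Q+bP)$ and then reads off the condition $\tau_{Q,P}(i) \leq b$ from the cardinality $|\{i' \leq i \mid \tau_{Q,P}(i') \leq b\}|$. You instead bypass the Riemann--Roch dimension count entirely and unwind the definitions: $L(G+\infty Q)\setminus\{0\}$ is precisely the set of nonzero $f \in \mathcal{R}(Q,P)$ with $v_P(f) \geq -b$, so membership of $i$ in $H(Q;G)$ is the existence of some $j \leq b$ with $(i,j) \in H(Q,P)$, which is $\tau_{Q,P}(i) \leq b$ because the minimum defining $\tau_{Q,P}(i)$ is attained (guaranteed by Proposition \ref{prop:properties_tau}(b)); the symmetric argument plus $\tau_{P,Q} = \tau_{Q,P}^{-1}$ gives the second equality. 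This is more elementary and arguably cleaner, since it never needs the equivalence between non-gaps and jumps in dimension; what the dimension-formula route buys is that Theorem \ref{theorem:dim_LG} is needed elsewhere anyway, so the corollary comes for free once that machinery is in place. One cosmetic slip: the regularity condition defining $\mathcal{R}(Q,P)$ and membership in $L(G+\infty Q)$ must hold at \emph{all} places $R \neq Q, P$, not only the rational ones; this does not affect your chain of equivalences.
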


Another important consequence of Theorem \ref{theorem:dim_LG} that we have not explicitly found in literature, but is worth to mention, is that a possible way of determining the full two-point Weierstrass semigroup $H(Q, P)$ from the sole knowledge of the map $\tau_{Q, P}$ is the following.

\begin{corollary}
  $H(Q, P) = \{ (i, j) \in \mathbb{Z}^2 \mid \tau_{Q, P}(i) \leq j, \tau_{Q, P}^{-1}(j) \leq i \}.$
\end{corollary}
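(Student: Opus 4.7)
My plan is to prove the two inclusions separately. The inclusion ``$\subseteq$'' is immediate from the definitions: if $(i,j) \in H(Q,P)$, then $\tau_{Q,P}(i) \leq j$ directly from the definition of $\tau_{Q,P}(i)$ as a minimum over second coordinates. Since $(i,j) \in H(Q,P)$ iff $(j,i) \in H(P,Q)$ (obtained by reading the defining condition symmetrically in the two points), the same argument with the roles of $Q$ and $P$ swapped yields $\tau_{P,Q}(j) \leq i$, and Proposition \ref{prop:properties_tau}(a) identifies $\tau_{P,Q}$ with $\tau_{Q,P}^{-1}$.

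For the reverse inclusion, suppose $\tau_{Q,P}(i) \leq j$ and $\tau_{Q,P}^{-1}(j) \leq i$. My strategy is to exhibit a nonzero $f \in \mathcal{R}(Q,P)$ with $v_Q(f) = -i$ and $v_P(f) = -j$ by a dimension count inside $L(iQ + jP)$. Such functions are exactly the elements of $L(iQ + jP)$ that lie in neither $L((i-1)Q + jP)$ nor $L(iQ + (j-1)P)$: membership in one of these subspaces would force $v_Q(f) > -i$ or $v_P(f) > -j$, contradicting the required pole orders, whereas conversely any element avoiding both subspaces attains equality in the bounds $v_Q(f) \geq -i$ and $v_P(f) \geq -j$ coming from $f \in L(iQ+jP)$, and is automatically regular outside $\{Q,P\}$ by definition.

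The central step is therefore to verify that both $L((i-1)Q + jP)$ and $L(iQ + (j-1)P)$ are proper subspaces of $L(iQ + jP)$. By Theorem \ref{theorem:dim_LG}, $\dim L(iQ + jP) = |\{a \leq i \mid \tau_{Q,P}(a) \leq j\}|$; the hypothesis $\tau_{Q,P}(i) \leq j$ places $a = i$ in this counting set but not in the analogous set for $L((i-1)Q + jP)$, so the first inclusion is strict. The symmetric formula with $Q$ and $P$ interchanged, combined with $\tau_{Q,P}^{-1}(j) \leq i$, handles the second. Since no vector space over any field is the union of two proper subspaces, there exists $f \in L(iQ+jP)$ outside both of them, and by the discussion above this $f$ witnesses $(i,j) \in H(Q,P)$.

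The only delicate point I anticipate is that Theorem \ref{theorem:dim_LG} is stated for $a, b \in \mathbb{N}$, while the corollary ranges over $(i,j) \in \mathbb{Z}^2$. Proposition \ref{prop:properties_tau}(b) gives $\tau_{Q,P}(i) \geq -i$, so the hypothesis $\tau_{Q,P}(i) \leq j$ already forces $i + j \geq 0$; hence there is an integer $k$ with $i + k\pi \geq 0$ and $j - k\pi \geq 0$. Multiplying by a function whose divisor is $k\pi(Q - P)$ transports the whole setup into the non-negative range and, thanks to the periodicity relation in Proposition \ref{prop:properties_tau}(c), commutes with the $\tau_{Q,P}$-computations above, so the dimension argument applies unchanged.
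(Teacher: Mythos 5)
Your overall route is the paper's: characterize $(i,j)\in H(Q,P)$ by the two strict inclusions $L((i-1)Q+jP)\subsetneq L(iQ+jP)$ and $L(iQ+(j-1)P)\subsetneq L(iQ+jP)$, decide these via Theorem \ref{theorem:dim_LG}, and you additionally make explicit the fact (left implicit in the paper) that a vector space is not the union of two proper subspaces. The forward inclusion and the case $i,j\geq 0$ are correct. The gap is in your last paragraph. From $i+j\geq 0$ you infer the existence of an integer $k$ with $i+k\pi\geq 0$ and $j-k\pi\geq 0$; this requires the interval $[-i,j]$ to contain a multiple of $\pi$, and its nonemptiness is not enough when one coordinate is negative and $i+j<\pi$. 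This is not a hypothetical worry: by Lemma \ref{lemma:divisor_thetatilde} the function $\tilde{\theta}_0$ shows that $(i,j)=(-m,mq)$ lies in $H(Q_1,P_1)$, so by your own forward inclusion it satisfies the hypotheses of the reverse one; yet $0<m\leq mq<\pi=m(q+1)$, so $[-i,j]=[m,mq]$ contains no multiple of $\pi$ and your shift cannot be performed. Thus the reverse inclusion is unproven exactly on a nontrivial part of the semigroup.

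The repair is cheap, and there are two options. The paper's option is to observe that the proof of Theorem \ref{theorem:dim_LG} given in the cited reference nowhere uses non-negativity of the coefficients, so the dimension formula applies verbatim to two-point divisors with a negative coefficient. Alternatively, you can bypass Theorem \ref{theorem:dim_LG} entirely: for arbitrary $i,j\in\mathbb{Z}$, one has $L(iQ+jP)\neq L((i-1)Q+jP)$ if and only if there exists $f$ with $v_Q(f)=-i$, $v_P(f)\geq -j$ and $f$ regular outside $\{Q,P\}$, which by the definition of $\tau_{Q,P}$ as a minimum is precisely the condition $\tau_{Q,P}(i)\leq j$; the symmetric statement handles the other inclusion. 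With either fix, your union-of-two-proper-subspaces argument closes the proof with no restriction on the signs of $i$ and $j$.
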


\begin{proof}
  By definition, the pair $(i, j)$ belongs to $H(Q, P)$ if and only if there exists a function $f$ such that $(f)_\infty = iQ + jP$. This is equivalent to saying that $L(iQ + jP) \neq L((i-1)Q + jP)$ and $L(iQ + jP) \neq L(iQ + (j-1)P)$. From the inclusions $L((i-1)Q + jP) \subseteq L(iQ + jP)$ and $L(iQ + (j-1)P) \subseteq L(iQ + jP)$ and from the fact that Riemann-Roch spaces have a vector space structure, $(i, j)$ belongs to $H(Q, P)$ if and only if
  $$\begin{cases}
    \dim{L(iQ + jP)} \neq \dim{L((i-1)Q + jP)}, \\
    \dim{L(iQ + jP)} \neq \dim{L(iQ + (j-1)P)},
  \end{cases}$$
  which can be reformulated using Theorem \ref{theorem:dim_LG} as
  $$\begin{cases}
    |\{ i^\prime \leq i \mid \tau_{Q, P}(i^\prime) \leq j \}| \neq |\{ i^\prime \leq i-1 \mid \tau_{Q, P}(i^\prime) \leq j \}|, \\
    |\{ j^\prime \leq j \mid \tau_{Q, P}^{-1}(j^\prime) \leq i \}| \neq |\{ j^\prime \leq j-1 \mid \tau_{Q, P}^{-1}(j^\prime) \leq i \}|.
  \end{cases}$$
   Note that we can use Theorem \ref{theorem:dim_LG} when at least one of the coefficients of the two-point divisor is negative, as this is not required in the proof of Theorem \ref{theorem:dim_LG} given in \cite{TwoPointCodes}. The two conditions of the above system are equivalent to the conditions $\tau_{Q, P}(i) \leq j$ and $\tau_{Q, P}^{-1}(j) \leq i$ respectively.
\end{proof}

\begin{remark}
  The fact that $H(Q, P)$ can be fully determined from the sole knowledge of $\tau_{Q, P}$ can also be deduced from \cite{TwoPointsSuzuki}*{Lemma 3.2}. Although our definition of two-point Weierstrass semigroup is slightly different from the one adopted in \cite{TwoPointsSuzuki}, we can adapt the methods used in \cite{TwoPointsSuzuki} to our setting: we define an operation of addition between two pairs of integers $(i_1, j_1)$ and $(i_2, j_2)$ as follows:
  $$ (i_1, j_1) + (i_2, j_2) := (\max \{ i_1, i_2 \}, \max \{ j_1, j_2 \}). $$
  With this operation, the two-point Weierstrass semigroup $H(Q, P)$ is generated by the pairs $(i, \tau_{Q, P}(i))$ for $i \in \mathbb{Z}$. Moreover, all pairs $(i, \tau_{Q, P}(i))$ with $i \in \mathbb{Z}$ can be obtained from a minimal set $\{ (i, \tau_{Q, P}(i)) \mid 0 \leq i \leq \pi - 1 \}$ using Proposition \ref{prop:properties_tau} (c). In fact, for a fixed $i \in \mathbb{Z}$, there exist unique $i^\prime \in \{ 0, \dots, \pi - 1 \}$ and $k \in \mathbb{Z}$ such that $(i, \tau_{Q, P}(i)) = (i^\prime + k \pi, \tau_{Q, P}(i^\prime) - k \pi)$.
\end{remark}

\section{The Weierstrass semigroup at a certain pair of points on the $\mathcal{BM}_n$ curve}
\label{section:two_point_semigroup}

Let $q$ be a prime power, $n \geq 3$ be an odd integer and $m := (q^n+1)/(q+1)$. We devote this section to the study of a particular two-point Weierstrass semigroup, which will be specified later on, of the curve $\mathcal{BM}_n$ defined in \eqref{eq:BM_definition}. Firstly, we summarize some of the main properties of the curve $\mathcal{BM}_n$. Further details can be found in \cite{ANewFamily}.

\begin{proposition}
  Let $\mathcal{BM}_n$ be the Beelen-Montanucci curve defined in \eqref{eq:BM_definition} and $\mathcal{H}_q$ the Hermitian curve defined in \eqref{eq:Hermitian}.
  \begin{itemize}
  \item $\mathcal{BM}_n$ is maximal over the field $\mathbb{F}_{q^{2n}}$.
  \item $\mathcal{BM}_n$ has genus $g(\mathcal{BM}_n) = \frac{1}{2} (q-1)(q^{n+1}+q^n-q^2)$ and $N_n = q^{2n+2} - q^{n+3} + q^{n+2} + 1$ $\mathbb{F}_{q^{2n}}$-rational points.
  \item The $q^3+1$ $\mathbb{F}_{q^2}$-rational points of $\mathcal{H}_q$ are totally ramified in the cover $\mathcal{BM}_n \to \mathcal{H}_q$.
  \item The full automorphism group $\mathrm{Aut}(\mathcal{BM}_n)$ of $\mathcal{BM}_n$ is isomorphic to $\mathrm{SL}(2, q) \rtimes C_{q^n+1}$, where $C_{q^n+1}$ is the cyclic group with $q^n+1$ elements, and acts on the set of $\mathbb{F}_{q^2}$-rational points of $\mathcal{BM}_n$ with two orbits
    $$ O_1 := \{ P_1, \dots, P_{q+1} \} \quad \text{and} \quad O_2 := \{ Q_1, \dots, Q_{q^3-q} \}, $$
    where $P_1, \dots, P_{q+1}$ are the $q+1$ $\mathbb{F}_{q^2}$-rational points of $\mathcal{BM}_n$ lying over the $q+1$ points at infinity of $\mathcal{H}_q$ and $Q_1, \dots, Q_{q^3-q}$ are the $q^3-q$ $\mathbb{F}_{q^2}$-rational points of $\mathcal{BM}_n$ lying over the remaining $\mathbb{F}_{q^2}$-rational points of $\mathcal{H}_q$.
  \end{itemize}
\end{proposition}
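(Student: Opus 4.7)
The plan is to address the four bullets in order, drawing the substantive computations from \cite{ANewFamily}.

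For maximality and genus, I would first embed $\mathcal{BM}_n$ as a subcover of the Hermitian curve $\mathcal{H}_{q^n}$, which is $\mathbb{F}_{q^{2n}}$-maximal by construction; Serre's theorem on quotients of maximal curves then implies that $\mathcal{BM}_n$ is itself $\mathbb{F}_{q^{2n}}$-maximal. To pin down the genus I would apply Riemann--Hurwitz to the Kummer cover $\mathcal{BM}_n\to \mathcal{H}_q$ defined by $z^m = h$ with $h := y(x^{q^2}-x)/(x^{q+1}-1)$. The crucial input is a divisor analysis of $h$ on $\mathcal{H}_q$: one verifies that $(h)$ is supported only on the $q^3+1$ $\mathbb{F}_{q^2}$-rational points of $\mathcal{H}_q$ and that the valuation $v_R(h)$ at each such $R$ is coprime to $m$. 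This single fact simultaneously forces total ramification of every $\mathbb{F}_{q^2}$-rational point of $\mathcal{H}_q$ in the cover (the third bullet) and, via Riemann--Hurwitz with $g(\mathcal{H}_q)=q(q-1)/2$ and degree $m=(q^n+1)/(q+1)$, yields the stated genus $g(\mathcal{BM}_n)=\tfrac12(q-1)(q^{n+1}+q^n-q^2)$.

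The point count $N_n$ is then forced: by maximality the Hasse--Weil bound is saturated, so $|\mathcal{BM}_n(\mathbb{F}_{q^{2n}})| = q^{2n}+1+2g(\mathcal{BM}_n)\,q^n$, and substituting the genus formula gives $q^{2n+2}-q^{n+3}+q^{n+2}+1$ after a direct expansion.

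For the automorphism group the easy direction exhibits $\mathrm{SL}(2,q)\rtimes C_{q^n+1}$ as a subgroup: the factor $\mathrm{SL}(2,q)$ comes from those automorphisms of $\mathcal{H}_q$ that fix $h$ up to an $m$-th power and therefore lift to $\mathcal{BM}_n$, while the cyclic factor $C_{q^n+1}$ is generated by the Kummer automorphism $z\mapsto \zeta_m z$ together with a compatible scaling of $(x,y)$ by a $(q+1)$-th root of unity; the semidirect structure reflects the action of $\mathrm{SL}(2,q)$ on the Kummer generator. Since total ramification identifies the $\mathbb{F}_{q^2}$-rational points of $\mathcal{BM}_n$ bijectively with those of $\mathcal{H}_q$, the two orbits $O_1$ and $O_2$ match the two $\mathrm{SL}(2,q)$-orbits on $\mathcal{H}_q(\mathbb{F}_{q^2})$, i.e.\ the $q+1$ points at infinity and the remaining $q^3-q$ affine ones. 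The main obstacle is the reverse inclusion: showing that no automorphism lies outside this subgroup. This is the genuinely hard part of \cite{ANewFamily}, where it is handled by bounding $|\mathrm{Aut}(\mathcal{BM}_n)|$ via the genus in the style of the Stichtenoth--Henn classification of curves with many automorphisms, combined with a fixed-point analysis of putative extra automorphisms. I would not reproduce that argument and instead cite the corresponding theorem of \cite{ANewFamily} directly.
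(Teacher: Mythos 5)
The paper does not actually prove this proposition; it is a summary of results from \cite{ANewFamily}, cited as such. Your proposal goes further and tries to reconstruct the arguments, and most of it is sound: the divisor analysis of $h=y(x^{q^2}-x)/(x^{q+1}-1)$ on $\mathcal{H}_q$ (support exactly the $q^3+1$ points of $\mathcal{H}_q(\mathbb{F}_{q^2})$, valuations $1$ at the affine ones and $-(q^2-q)$ at infinity, all coprime to $m$) does give total ramification and, via Riemann--Hurwitz for the tame Kummer cover of degree $m$, the stated genus; and deferring the hard direction of the automorphism-group computation to \cite{ANewFamily} is exactly what the paper itself does.

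The genuine gap is your first step. You propose to get maximality by exhibiting $\mathcal{BM}_n$ as a subcover of the Hermitian curve $\mathcal{H}_{q^n}$ and invoking the Kleiman--Serre covering result. No such covering is available: for $n=3$ the curve $\mathcal{BM}_3$ is isomorphic to the $GK$ curve, and the central point of Giulietti--Korchm\'aros's construction (inherited by the $GGS_n$ and $\mathcal{BM}_n$ generalizations) is precisely that these are maximal curves that are \emph{not} Galois subcovers of the Hermitian curve $\mathcal{H}_{q^n}$ for $q>2$, and no covering of any kind is known. So the premise of your maximality argument is false in general, and since you then derive the point count $N_n$ \emph{from} maximality, that bullet collapses as well. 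In \cite{ANewFamily} the logic runs in the opposite direction: one computes the genus by Riemann--Hurwitz as you do, then counts the $\mathbb{F}_{q^{2n}}$-rational points of $\mathcal{BM}_n$ directly --- the delicate step being to determine, via norm computations, for exactly which unramified points $P\in\mathcal{H}_q(\mathbb{F}_{q^{2n}})$ the value $h(P)$ is an $m$-th power in $\mathbb{F}_{q^{2n}}$ (not all of them split) --- and only then concludes maximality because the count attains the Hasse--Weil bound. You would need to replace your first paragraph by this point count, or by an explicit citation of the maximality theorem of \cite{ANewFamily}, rather than by a covering argument that cannot be carried out.
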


Note that the points in $O_1$ can be parametrized in homogeneous coordinates $(x : y : z : w)$ by $P_i = (1 : a_i : 0 : 0)$ with $a_i^{q+1} = 1$. Let $M := (m-1)/(q^2-q)$. From \cite{AGcodes}*{Theorem 1.1}, the Weierstrass semigroup $H(P)$ at any point $P \in O_1$ is
\begin{equation}
  \label{eq:semigroup_O1}
  H(P) = \langle q^n+1, mq+k(q^2-q) \mid k = 0, \dots, M \rangle
\end{equation}
and the Weierstrass semigroup $H(Q)$ at any point $Q \in O_2$ is
\begin{equation}
  \label{eq:semigroup_O2}
  H(Q) = \langle q^n+1-m, q^n+1-k \mid k = 0, \dots, M \rangle.
\end{equation}

Since the Weierstrass semigroup at any point is invariant under the action of the automorphism group $\mathrm{Aut}(\mathcal{BM}_n)$ on that point (see for example \cite{Stichtenoth}*{Lemma 3.5.2}), points in the same orbit have the same Weierstrass semigroup. In the following, we will choose
$$ P_1 := (1 : -1 : 0 : 0) \quad \text{and} \quad Q_1 := (1 : 0 : 0 : 1) $$
as representatives of the orbits $O_1$ and $O_2$ respectively.

We recall some functions in $\mathbb{F}_{q^{2n}}(\mathcal{BM}_n)$ and their principal divisors, that will be useful in the following. Let
$$ \alpha := \frac{x-1}{x+y} \quad \text{and} \quad \theta_k := \frac{z^k}{x+y} $$
for $k = 0, \dots, M$. From \cite{AGcodes}*{Lemma 3.1, Lemma 3.3}:
\begin{align}
  (x + y) & = mq P_1 - m \sum_{i=2}^{q+1} P_i, \label{eq:divisor_xplusy} \\
  (\alpha) & = (q^n+1)(Q_1 - P_1), \label{eq:divisor_alpha} \\
  (\theta_k) & = k \sum_{i = 1}^{q^3-q} Q_i + (m - k(q^2-q)) \sum_{i=2}^{q+1} P_i - (mq+k(q^2-q)) P_1. \label{eq:divisor_thetak}
\end{align}

\begin{lemma}
  \label{lemma:divisor_thetatilde}
  Let $\tilde{\theta}_0 := \theta_0 - 1$. The principal divisor of $\tilde{\theta}_0$ in $\mathbb{F}_{q^{2n}}(\mathcal{BM}_n)$ is $(\tilde{\theta}_0) = m Q_1 + E - mq P_1$, where $E$ is an effective divisor whose support does not contain $Q_1$ and $P_1$.
\end{lemma}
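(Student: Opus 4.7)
The plan is to rewrite $\tilde{\theta}_0 = \theta_0 - 1$ as a ratio of two functions whose divisors I can compute explicitly from the results already stated, and then pinpoint the contribution of $Q_1$ by a local-uniformizer argument at $Q_1$.

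First I would write
\[
  \tilde{\theta}_0 = \theta_0 - 1 = \frac{1}{x+y} - 1 = \frac{1-(x+y)}{x+y},
\]
so that $(\tilde{\theta}_0) = ((x+y)-1) - (x+y)$. The divisor of $x+y$ is given by \eqref{eq:divisor_xplusy}. Thus I only need to analyse the divisor of $(x+y)-1$. Since $1$ has no poles, the pole divisor of $(x+y)-1$ coincides with that of $x+y$, namely $m\sum_{i=2}^{q+1} P_i$. Hence the zero divisor $Z$ of $(x+y)-1$ is effective of degree $mq$ and disjoint from $\{P_2,\dots,P_{q+1}\}$. Moreover $(x+y)-1$ evaluated at $P_1$ equals $-1$ (because $P_1$ is a zero of $x+y$ by \eqref{eq:divisor_xplusy}), so $P_1\notin\mathrm{supp}(Z)$. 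Substituting back:
\[
  (\tilde{\theta}_0) = Z - m\sum_{i=2}^{q+1} P_i - mqP_1 + m\sum_{i=2}^{q+1} P_i = Z - mqP_1.
\]

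The main (and only nontrivial) step is to compute $v_{Q_1}((x+y)-1)=v_{Q_1}((x-1)+y)$. For this I use the cover $\mathcal{BM}_n\to\mathcal{H}_q$, which is totally ramified at the $\mathbb{F}_{q^2}$-rational point $(x,y)=(1,0)$ of $\mathcal{H}_q$ over which $Q_1$ lies, with ramification index equal to the degree $m$ of the cover. At $(1,0)\in\mathcal{H}_q$ the relation $y^{q+1}=x^{q+1}-1=(x-1)(x^q+\dots+1)$ together with the fact that $x^q+\dots+1$ is a unit at $x=1$ (its value there is $q+1\neq 0$ in characteristic $p$) shows that $y$ is a local parameter at $(1,0)$ with $v_{(1,0)}(y)=1$ and $v_{(1,0)}(x-1)=q+1$. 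Pulling back to $\mathcal{BM}_n$ and multiplying by the ramification index $m$ yields $v_{Q_1}(y)=m$ and $v_{Q_1}(x-1)=m(q+1)$. Since these are distinct, $v_{Q_1}((x-1)+y)=\min\{m(q+1),m\}=m$.

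Finally, writing $Z = mQ_1 + E$, the divisor $E$ is effective of degree $mq-m = m(q-1)$; by construction $Q_1\notin\mathrm{supp}(E)$, and we already observed that $P_1\notin\mathrm{supp}(Z)\supseteq\mathrm{supp}(E)$. Thus
\[
  (\tilde{\theta}_0) = mQ_1 + E - mqP_1,
\]
as required. I expect the local-parameter computation at $Q_1$ — more precisely, identifying $y$ as a uniformizer at $(1,0)$ on $\mathcal{H}_q$ and then transferring to $\mathcal{BM}_n$ via the total ramification — to be the only step that needs any care; everything else is a direct consequence of the divisor formulas \eqref{eq:divisor_xplusy}--\eqref{eq:divisor_thetak} together with degree bookkeeping.
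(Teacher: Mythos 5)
Your proof is correct, and it follows the same basic decomposition as the paper's: both write $\tilde{\theta}_0 = -\bigl((x+y)-1\bigr)/(x+y)$ and reduce to understanding the divisor of $t:=(x+y)-1$. Where you differ is in how the coefficient of $Q_1$ in the zero divisor of $t$ is obtained. The paper computes the \emph{entire} divisor of $t$ on the Hermitian curve by intersecting the line $x+y=1$ with $\mathcal{H}_q$: the line meets the degree-$(q+1)$ curve in the $q+1$ distinct points $\overline{Q}_b=(1-b:b:1)$, $b^q+b=0$, together with $\overline{P}_1$, so each intersection is transversal and $(t)_{\mathcal{H}_q}=\sum_b\overline{Q}_b-\sum_{i=2}^{q+1}\overline{P}_i$; pulling back through the totally ramified cover then gives $E=m\sum_{b\neq 0}Q_{i_b}$ explicitly. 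You instead compute only the local valuation $v_{Q_1}(t)$ via the uniformizer $y$ at $(1,0)\in\mathcal{H}_q$ (using $y^{q+1}=(x-1)(x^q+\cdots+1)$ with the second factor a unit, so $v(y)=1$, $v(x-1)=q+1$, hence $v\bigl((x-1)+y\bigr)=1$, which becomes $m$ on $\mathcal{BM}_n$ by total ramification), and you handle the rest of the zero divisor purely by degree bookkeeping, noting that $P_1,\dots,P_{q+1}$ are not zeros of $t$. Your route is more local and avoids enumerating the intersection points, at the cost of leaving $E$ unidentified --- which is all the lemma asks for, so this is a perfectly adequate trade-off; the paper's global argument yields the stronger, explicit description of $E$ as a bonus. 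The one step you state somewhat tersely is why $v_{(1,0)}(y)=1$ (it follows because $(1,0)$ is a smooth point, so $\min\{v(x-1),v(y)\}=1$, and $v(x-1)=(q+1)v(y)>v(y)$), but the claim is standard and correct.
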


\begin{proof}
  Define $t := x + y - 1$, so that $\tilde{\theta}_0 = -t/(x+y)$ and $\tilde{\theta}_0$ has principal divisor $(\tilde{\theta}_0) = (t) - (x+y)$. Let $\overline{P}_1, \dots, \overline{P}_{q+1}$ be the $q+1$ points at infinity of the Hermitian curve $\mathcal{H}_q$, namely $\overline{P}_i := (1 : a_i : 0)$ in homogeneous coordinates $(x : y : w)$, with $a_i^{q+1} = 1$. The line defined by $t$ intersects $\mathcal{H}_q$ in exactly $q+1$ distinct points; these points are $\overline{Q}_b := (1-b : b : 1)$ for $b \in \mathbb{F}_{q^2}$ satisfying $b^q + b = 0$ and $\overline{P}_1$.  Then, the principal divisor of $t$ in $\mathbb{F}_{q^{2n}}(\mathcal{H}_q)$ is:
  $$ (t)_{\mathbb{F}_{q^{2n}}(\mathcal{H}_q)} = \overline{P}_1 + \sum_{\substack{b \in \mathbb{F}_{q^2} \\ b^q + b = 0}} \overline{Q}_b - \sum_{i=1}^{q+1} \overline{P}_i = \sum_{\substack{b \in \mathbb{F}_{q^2} \\ b^q + b = 0}} \overline{Q}_b - \sum_{i=2}^{q+1} \overline{P}_i. $$
  For all $b \in \mathbb{F}_{q^2}$ satisfying $b^q + b = 0$ let $Q_{i_b} := (1-b : b : 0 : 1)$ be the unique point in $O_2$ lying over $\overline{Q}_b$ and note that $Q_1 = Q_{i_0}$. Further, $P_i$ is the unique point in $O_1$ lying over $\overline{P}_i$ for all $i = 1, \dots, q+1$. Then, the principal divisor of $t$ in $\mathbb{F}_{q^{2n}}(\mathcal{BM}_n)$ is
  $$ (t) = m \sum_{\substack{b \in \mathbb{F}_{q^2} \\ b^q + b = 0}} Q_{i_b} - m \sum_{i=2}^{q+1} P_i. $$
  Hence,
  \begin{align*}
    (\tilde{\theta}_0) & = m \sum_{\substack{b \in \mathbb{F}_{q^2} \\ b^q + b = 0}} Q_{i_b} - m \sum_{i=2}^{q+1} P_i - mq P_1 + m \sum_{i=2}^{q+1} P_i \\
                       & = m Q_1 + m \sum_{\substack{b \in \mathbb{F}_{q^2}, b \neq 0 \\ b^q + b = 0}} Q_{i_b} - mq P_1.
  \end{align*}
\end{proof}

We are now ready to focus on the two-point Weierstrass semigroup $H(Q_1, P_1)$. To this aim, we first give an explicit description of the ring of functions that are regular outside $Q_1$ and $P_1$ and we compute the period of $H(Q_1, P_1)$. 

\begin{proposition}
  \label{prop:ring-reg-func}
  $\mathcal{R}(Q_1, P_1) = \mathbb{F}_{q^{2n}} [\alpha, \alpha^{-1}, \tilde{\theta}_0, \theta_1, \theta_2, \dots, \theta_M]$.
\end{proposition}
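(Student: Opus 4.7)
The plan is to prove both inclusions separately, writing $A := \mathbb{F}_{q^{2n}}[\alpha, \alpha^{-1}, \tilde{\theta}_0, \theta_1, \dots, \theta_M]$ for the candidate ring on the right.

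The inclusion $A \subseteq \mathcal{R}(Q_1, P_1)$ is immediate from the divisor formulas already established: \eqref{eq:divisor_alpha} gives $(\alpha) = (q^n+1)(Q_1 - P_1)$, so $\alpha^{\pm 1} \in \mathcal{R}(Q_1, P_1)$; Lemma \ref{lemma:divisor_thetatilde} places $\tilde{\theta}_0$ directly in $\mathcal{R}(Q_1, P_1)$; and \eqref{eq:divisor_thetak} shows that the negative part of $(\theta_k)$ is supported only at $P_1$ precisely because $m - k(q^2-q) \geq 0$ for $0 \leq k \leq M$, which is the very definition of $M$.

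For the reverse inclusion I would argue by a double induction on the pole orders. Given $f \in \mathcal{R}(Q_1, P_1) \setminus \{0\}$ with $(v_{Q_1}(f), v_{P_1}(f)) = (-a, -b)$, the first step is to produce $\mu \in A$ with the same valuations and then subtract a suitable $\mathbb{F}_{q^{2n}}^*$-multiple of $\mu$, strictly increasing $v_{P_1}$. Iterating (a finite process, by finite-dimensionality of each $L(aQ_1 + bP_1)$) reduces $f$ to an element of $L(\infty Q_1) \cap \mathcal{R}(Q_1, P_1)$, namely a function whose poles are concentrated at $Q_1$; a parallel reduction at $Q_1$ using monomials in $A$ with poles only at $Q_1$ then drives $f$ down to a constant, which lies in $A$ trivially.

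The main obstacle is the subclaim: for every $(a, b) \in H(Q_1, P_1)$ there is an explicit monomial $\mu_{a,b} = \alpha^s \tilde{\theta}_0^{e_0} \theta_1^{e_1} \cdots \theta_M^{e_M}$ in $A$, with $s \in \mathbb{Z}$ and $e_i \in \mathbb{N}_0$, having $v_{Q_1}(\mu_{a,b}) = -a$ and $v_{P_1}(\mu_{a,b}) = -b$. Its valuations are computed by summing the contributions from \eqref{eq:divisor_alpha}, Lemma \ref{lemma:divisor_thetatilde}, and \eqref{eq:divisor_thetak}. Since $\alpha^s$ shifts the first coordinate by multiples of $q^n+1$, it is enough to reach every residue class modulo $q^n+1$ using the remaining generators: $\tilde{\theta}_0$ shifts the first coordinate by $+m$, while $\theta_k$ shifts it by $-k$ for $k=1,\dots,M$. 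As a consistency check, the products $\alpha^{-1}\tilde{\theta}_0$ and $\alpha^{-1}\theta_k$ have pole orders $mq$ and $q^n+1-k$ at $Q_1$ with no pole at $P_1$, so they realize every generator of $H(Q_1)$ as given in \eqref{eq:semigroup_O2}, and a symmetric analysis using the generators directly realizes the generators of $H(P_1)$ in \eqref{eq:semigroup_O1}. Closing this combinatorial verification---showing that arbitrary products of the generators cover the full semigroup $H(Q_1, P_1)$---is the routine but technical heart of the argument.
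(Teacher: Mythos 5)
Your first inclusion is fine, but the reverse inclusion rests on a subclaim that you defer as ``routine but technical'' and which is in fact false as stated: it is not true that every pair $(a,b)\in H(Q_1,P_1)$ is realized by a monomial $\alpha^{s}\tilde{\theta}_0^{e_0}\theta_1^{e_1}\cdots\theta_M^{e_M}$, i.e.\ $H(Q_1,P_1)$ is \emph{not} the additive span of the valuation vectors of the generators. A concrete counterexample is $\theta_0=1+\tilde{\theta}_0$: by \eqref{eq:divisor_xplusy} its divisor is $m\sum_{i=2}^{q+1}P_i-mqP_1$, so $(0,mq)\in H(Q_1,P_1)$; but a monomial with $v_{Q_1}=0$ and $-v_{P_1}=mq$ would require $s(q^n+1)=-\ell m-\sum_k k e_k$ together with $\ell m(q-1)+\sum_k e_k\bigl(mq+k(q^2-q-1)\bigr)=mq$, which has no solution with $\ell,e_k\in\mathbb{N}_0$ and $s\in\mathbb{Z}$. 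This is the general phenomenon recalled in the final remark of Section \ref{section:preliminary_results}: the two-point semigroup is closed under coordinatewise maxima, which produces pairs outside the additive span. Your reduction can be repaired---to cancel the leading term at $P_1$ you only need a monomial with pole order exactly $b$ at $P_1$ and pole order \emph{at most} $a$ at $Q_1$, i.e.\ one realizing $(\tau_{Q_1,P_1}^{-1}(b),b)$---but proving that such monomials exist for every $b$ is essentially the content of Theorem \ref{theorem:tau_BM}, so the ``combinatorial heart'' you leave open is genuinely the hard part and is not closed by your consistency checks at the two one-point semigroups.

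The paper's proof sidesteps this two-dimensional bookkeeping entirely. Since $(\alpha)=(q^n+1)(Q_1-P_1)$, any $f\in\mathcal{R}(Q_1,P_1)$ becomes regular at $Q_1$ after multiplication by a suitable power $\alpha^{k}$ with $k\geq 0$, so the statement reduces to the one-point identity $\mathcal{R}(P_1)=\mathbb{F}_{q^{2n}}[\alpha,\tilde{\theta}_0,\theta_1,\dots,\theta_M]$. There the situation is clean: all generators have poles only at $P_1$, their pole orders are exactly the generators of $H(P_1)$ in \eqref{eq:semigroup_O1}, and a single induction on the pole order at $P_1$ finishes the argument. You should adopt this reduction; without it, your double induction cannot be completed before the map $\tau_{Q_1,P_1}$ has been determined.
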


\begin{proof}
  From \eqref{eq:divisor_alpha}, \eqref{eq:divisor_thetak} and Lemma \ref{lemma:divisor_thetatilde} it is clear that the $\mathbb{F}_{q^{2n}}$-rational functions $\alpha, \tilde{\theta_0}, \theta_1, \dots, \theta_M$ are regular outside $P_1$; from \eqref{eq:semigroup_O1} it follows that
  $$ H(P_1) = \langle -v_{P_1}(\alpha), -v_{P_1}(\tilde{\theta_0}), -v_{P_1}(\theta_1), \dots, -v_{P_1}(\theta_M) \rangle. $$
  We first prove that the ring $\mathcal{R}(P_1) := \bigcup_{i\geq 0} L(iP_1)$ of $\mathbb{F}_{q^{2n}}$-rational functions that are regular outside $P_1$ is
  \begin{equation}
    \label{eq:regular_functions_P1}
    \mathcal{R}(P_1) = \mathbb{F}_{q^{2n}}[\alpha, \tilde{\theta_0}, \theta_1, \dots, \theta_M].  
  \end{equation}
  It is clear that $\mathbb{F}_{q^{2n}}[\alpha, \tilde{\theta_0}, \theta_1, \dots, \theta_M] \subseteq \bigcup_{i\geq 0} L(iP_1)$. In fact, for each function $h$ of the ring $\mathbb{F}_{q^{2n}}[\alpha, \tilde{\theta_0}, \theta_1, \dots, \theta_M]$, being $h$ a combination of $\alpha, \tilde{\theta_0}, \theta_1, \dots, \theta_M$, there exists a positive integer $\hat{\gamma}$ such that $h \in L(\hat{\gamma} P_1)$. \\
  Conversely, if $h \in \bigcup_{i\geq 0} L(iP_1)$, then in particular $h \in L(i_h P_1)$ for some $i_h \geq 0$. We prove that $h$ belongs to $\mathbb{F}_{q^{2n}}[\alpha, \tilde{\theta_0}, \theta_1, \dots, \theta_M]$ by induction on $i_h$. If $i_h = 0$ then trivially $h$ belongs to $L(0)=\mathbb{F}_{q^{2n}}\subseteq \mathbb{F}_{q^{2n}}[\alpha, \tilde{\theta_0}, \theta_1, \dots, \theta_M]$. We proceed now to the induction step. Assume that the claim holds for all integers $i_h$ less than or equal to $\tilde{n}$ and consider $i_h = \tilde{n}+1$. If $\tilde{n}+1$ is not an element of $H(P_1)$, then $h \in L(kP_1)$ for some $k \leq \tilde{n}$, and the thesis follows by induction. If instead $\tilde{n}+1$ belongs to $H(P_1)$, then $\tilde{n}+1$ can be written as a combination of $-v_{P_1}(\alpha), -v_{P_1}(\tilde{\theta_0}), -v_{P_1}(\theta_1), \dots, -v_{P_1}(\theta_M)$, namely
  $$ \tilde{n} + 1 = a_1 (-v_{P_1}(\alpha)) +\cdots + a_{M+2}(-v_{P_1}(\theta_M)) $$  
  for some $a_i\in \mathbb{N}$, $1\leq i\leq \ M+2$. Then note that the pole-divisor $(h)_\infty$ of $h$ is
  $$ (h)_\infty = (\alpha^{a_1} \cdot \tilde{\theta_0}^{a_2} \cdot \theta_1^{a_3} \cdots \theta_M^{a_{M+2}})_\infty $$
  and hence there exists $\lambda\in\mathbb{F}_{q^{2n}}\setminus\{0\}$ such that $h^\prime := h - \lambda \alpha^{a_1} \cdot \tilde{\theta_0}^{a_2} \cdots \theta_1^{a_3} \cdots \theta_M^{a_{M+2}}$ is an element of $\bigcup_{i\geq 0}L(iP_1)$ with $v_{P_1}(h^\prime) > - (\tilde{n}+1)$. By the induction hypothesis $h^\prime \in \mathbb{F}_{q^{2n}}[\alpha, \tilde{\theta_0}, \theta_1, \dots, \theta_M]$ and so 
  $$ h = h^\prime + \lambda \alpha^{a_1} \cdot \tilde{\theta_0}^{a_2} \cdots \theta_1^{a_3} \cdots \theta_M^{a_{M+2}}\in \mathbb{F}_{q^{2n}}[\alpha, \tilde{\theta_0}, \theta_1, \dots, \theta_M]. $$
  The statement of the proposition now follows: it is clear from \eqref{eq:divisor_alpha} and \eqref{eq:regular_functions_P1} that any function in $\mathbb{F}_{q^{2n}}[\alpha, \alpha^{-1}, \tilde{\theta_0}, \theta_1, \dots, \theta_M]$ is regular outside $Q_1$ and $P_1$; conversely, for any $f \in \mathcal{R}(Q_1, P_1)$ there exists a suitable integer $k \geq 0$ such that $f \alpha^k$ belongs to $\mathcal{R}(P_1)$. This shows that $f$ belongs to $\mathbb{F}_{q^{2n}}[\alpha, \alpha^{-1}, \tilde{\theta_0}, \theta_1, \dots, \theta_M]$.
\end{proof}

\begin{lemma}
  \label{lem:period-two-point}
  The period $\pi$ of the Weierstrass semigroup $H(Q_1, P_1)$ is $\pi = q^n + 1$.
\end{lemma}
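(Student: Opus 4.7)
The plan is to establish $\pi = q^n+1$ in three short steps: show $\pi \mid (q^n+1)$ directly from the divisor of $\alpha$, observe that $\pi$ must be a non-zero non-gap at $P_1$, and then exclude every proper divisor of $q^n+1$ by comparing sizes against the generators of $H(P_1)$.

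For the first step, equation \eqref{eq:divisor_alpha} exhibits $(\alpha) = (q^n+1)(Q_1 - P_1)$, so $(q^n+1)(Q_1 - P_1)$ is principal and the defining minimality of $\pi$ immediately yields $\pi \mid (q^n+1)$.

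For the second step, I would translate the principality of $\pi(Q_1-P_1)$ into a statement about pole divisors: if $(f) = \pi(Q_1-P_1)$ for some $f \in \mathbb{F}_{q^{2n}}(\mathcal{BM}_n)$, then the pole divisor of $f$ is exactly $\pi P_1$, so $\pi$ is a non-zero element of the Weierstrass semigroup $H(P_1)$. From the explicit description \eqref{eq:semigroup_O1}, $H(P_1)$ is generated (as a numerical semigroup) by $q^n+1$ together with $mq + k(q^2-q)$ for $k = 0, \dots, M$; the smallest such generator is $mq$, since $q^n+1 = (q+1)m > qm$ and $mq + k(q^2-q) > mq$ for $k \geq 1$. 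Consequently every non-zero element of $H(P_1)$ is at least $mq$.

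For the final step, I would compare $mq$ with the sizes of proper divisors of $q^n+1$. Any proper divisor $d$ of $q^n+1$ satisfies $d \leq (q^n+1)/2$, and the routine inequality $(q^n+1)/2 < q(q^n+1)/(q+1) = mq$ holds whenever $q \geq 2$, so $d < mq$. Combined with the first step, this forces $\pi = q^n+1$, since $\pi$ is simultaneously a divisor of $q^n+1$ and a non-zero element of $H(P_1)$. The only actual content is the elementary numerical inequality in the last step, and I do not foresee any real obstacle.
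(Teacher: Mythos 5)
Your proof is correct, but it takes a different route from the paper's. The paper argues by contradiction entirely inside the Weierstrass semigroup at $Q_1$: if $k(Q_1-P_1)$ were principal for some $1\leq k\leq q^n$, then $k\in H(Q_1)$ forces $k\geq q^n+1-m=qm$, and multiplying the corresponding function by $\alpha^{-1}$ shows that $q^n+1-k=m-j$ would also have to lie in $H(Q_1)$, which is impossible since $0<m-j<qm$; no divisibility is invoked, and every $k$ up to $q^n$ is excluded by this two-step squeeze. You instead first observe that the set of $k$ with $k(Q_1-P_1)$ principal is a subgroup of $\mathbb{Z}$, so $\pi\mid q^n+1$, and then you only need to exclude proper divisors, which you do by noting that $\pi\in H(P_1)$, that the smallest non-zero element of $H(P_1)$ is $mq$ by \eqref{eq:semigroup_O1}, and that any proper divisor of $q^n+1$ is at most $(q^n+1)/2<q(q^n+1)/(q+1)=mq$. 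Both arguments ultimately hinge on the same numerical threshold $qm$ (which is simultaneously the first non-gap at $Q_1$ and at $P_1$), but yours buys a shorter final step at the cost of the (standard) divisibility observation, and it works with $H(P_1)$ rather than $H(Q_1)$; the paper's version avoids any appeal to the group structure of the class group and rules out the larger set $\{1,\dots,q^n\}$ directly. Either proof is complete and acceptable.
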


\begin{proof}
  Assume by contradiction that $k(Q_1 - P_1)$ is a principal divisor for some $k \in \{ 1, \dots, q^n \}$. Let $f \in \mathbb{F}_{q^{2n}}(\mathcal{BM}_n)$ such that $(f) = k(Q_1 - P_1)$. In particular $k$ is a non-gap of the Weierstrass semigroup $H(Q_1)$, as $v_{Q_1}(f^{-1}) = -k$ and $Q_1$ is the only pole of $f^{-1}$. The smallest non-zero element of $H(Q_1)$ is $q^n+1-m$ (see \eqref{eq:semigroup_O2}), hence $q^n+1-m \leq k \leq q^n$ and we can write $k = q^n+1-m+j$ for some $j \in \{ 0, \dots, m-1 \}$. Since
  $$ (\alpha^{-1} f) = (q^n+1-k)(P_1 - Q_1) = (m-j)(P_1 - Q_1), $$
  then $m-j$ must be a non-gap of the Weierstrass semigroup $H(Q_1)$; this is not possible, as $0 < m-j < qm = q^n+1-m$.
\end{proof}

Given Proposition \ref{prop:ring-reg-func} and Lemma \ref{lem:period-two-point}, we are now able to prove the main theorem of the paper, which provides the explicit expression of the function $\tau_{Q_1, P_1}$. Note that the knowledge of the function $\tau_{Q_1, P_1}$ is sufficient to determine the two-point Weierstrass semigroup $H(Q_1, P_1)$.

\begin{theorem}
  \label{theorem:tau_BM}
  Let $i \in \mathbb{Z}$ and write $i = -k(q^n+1) - \ell m - \beta$ for a unique triple $(k, \ell, \beta) \in \mathbb{Z}^3$ such that $0 \leq \beta < m$, $0 \leq \ell < q+1$. Let $\gamma := \lceil \beta / M \rceil$. Then
  $$ \tau_{Q_1, P_1} (i) = k(q^n+1) + (\gamma + \ell)mq + \beta (q^2-q). $$
\end{theorem}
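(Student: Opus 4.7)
The plan is to first construct a function realising the claimed value of $\tau_{Q_1,P_1}(i)$ as an upper bound, and then use the summation identity of Proposition \ref{prop:properties_tau}(d) to promote the inequality to an equality.

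Uniqueness of the triple $(k,\ell,\beta)$ follows from $m(q+1)=q^n+1$: reducing $-i$ modulo $m$ fixes $\beta\in\{0,\dots,m-1\}$, and then $(-i-\beta)/m=k(q+1)+\ell$ determines $\ell\in\{0,\dots,q\}$ and $k\in\mathbb{Z}$. For the construction I would define
$$f_i:=\alpha^{k}\,\tilde\theta_0^{\ell}\,\prod_{r=1}^{\gamma}\theta_{j_r},$$
with integers $j_1,\dots,j_\gamma\in\{1,\dots,M\}$ such that $\sum_r j_r=\beta$; such a choice exists because $\gamma=\lceil\beta/M\rceil$ gives $(\gamma-1)M<\beta\le\gamma M$, so one may take $j_1=\cdots=j_{\gamma-1}=M$ and $j_\gamma=\beta-(\gamma-1)M$. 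Combining \eqref{eq:divisor_alpha}, \eqref{eq:divisor_thetak} and Lemma \ref{lemma:divisor_thetatilde}, every factor is regular off $\{Q_1,P_1\}$ (the coefficient $m-j_r(q^2-q)$ of $(\theta_{j_r})$ is positive since $j_r\le M$, and the divisor $E$ from Lemma \ref{lemma:divisor_thetatilde} is effective with support disjoint from $\{Q_1,P_1\}$), so $f_i\in\mathcal{R}(Q_1,P_1)$. Summing valuations factor by factor gives $v_{Q_1}(f_i)=k(q^n+1)+\ell m+\beta=-i$ and $v_{P_1}(f_i)=-k(q^n+1)-(\ell+\gamma)mq-\beta(q^2-q)$. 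Writing $\phi(i)$ for the right-hand side of the theorem, this yields $\tau_{Q_1,P_1}(i)\le\phi(i)$ for every $i\in\mathbb{Z}$.

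For the matching lower bound I would observe that $\phi$ satisfies $\phi(i+\pi)=\phi(i)-\pi$ (replacing $i$ by $i+(q^n+1)$ just lowers $k$ by $1$), so both $\phi$ and $\tau_{Q_1,P_1}$ are determined by their values on the window $\{-q^n,\dots,0\}$, parametrised by $(\ell,\beta)\in\{0,\dots,q\}\times\{0,\dots,m-1\}$ with $k=0$. One then computes
$$\sum_{i=-q^n}^{0}\bigl(i+\phi(i)\bigr)=\sum_{\ell=0}^{q}\sum_{\beta=0}^{m-1}\bigl[\ell m(q-1)+\gamma(\beta)\,mq+\beta(q^2-q-1)\bigr],$$
evaluating the inner sum with $\sum_{\beta=0}^{m-1}\gamma(\beta)=M(q^2-q)(q^2-q+1)/2$ (each value $1,\dots,q^2-q$ is attained exactly $M$ times and $\gamma(0)=0$), together with $M(q^2-q)=m-1$ and $m(q+1)=q^n+1$. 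After simplification the total equals $\tfrac{1}{2}(q^n+1)(q-1)(q^{n+1}+q^n-q^2)=\pi g$. By Proposition \ref{prop:properties_tau}(d) the same sum for $\tau_{Q_1,P_1}$ is also $\pi g$; combined with the pointwise inequality $\tau_{Q_1,P_1}\le\phi$, this forces $\tau_{Q_1,P_1}=\phi$.

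The main obstacle I expect is the final arithmetic identity: each individual step is elementary, but the bookkeeping is delicate because $\phi$ is piecewise-linear in $\beta$ through the ceiling $\gamma(\beta)$, and the relation $m(q+1)=q^n+1$ must reappear at the end to recover the exact factor $\tfrac{1}{2}(q-1)(q^{n+1}+q^n-q^2)$. The construction itself is conceptually clean once Proposition \ref{prop:ring-reg-func} is in place; the only subtle point is that $v_{Q_1}(\tilde\theta_0^\ell)$ equals $\ell m$ on the nose, which relies on Lemma \ref{lemma:divisor_thetatilde} asserting that $Q_1$ does not lie in the support of $E$.
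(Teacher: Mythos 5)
Your proposal is correct and follows essentially the same route as the paper's proof: construct the explicit function $\alpha^k\tilde\theta_0^\ell\prod\theta_{j_r}$ (your choice of the $j_r$'s is exactly the paper's choice of multiplicities $i_M=\gamma-1$, $i_{\beta\bmod M}=1$) to get $\tau_{Q_1,P_1}(i)\le\phi(i)$, and then invoke Proposition \ref{prop:properties_tau}(d) together with the evaluation $\sum_{i=-\pi+1}^{0}(i+\phi(i))=\pi g$ to force equality. Your intermediate simplifications (the count $\sum_\beta\gamma(\beta)=M(q^2-q)(q^2-q+1)/2$ and the reduction via $M(q^2-q)=m-1$, $m(q+1)=q^n+1$) check out and reproduce the paper's closing computation.
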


\begin{proof}
  Define the map $\tilde{\tau} : \mathbb{Z} \to \mathbb{Z}$ such that $\tilde{\tau}(i) = k(q^n+1) + (\gamma + \ell)mq + \beta (q^2-q)$ for all $i \in \mathbb{Z}$ and $k, \ell, \beta, \gamma$ as in the assumptions. We will prove that $\tilde{\tau}(i) = \tau_{Q, P}(i)$ for all $i \in \mathbb{Z}$. In the following, we fix $i \in \mathbb{Z}$, so that $k, \ell, \beta, \gamma$ are fixed too. Choose $M$ non-negative integers $i_1, \dots, i_M$ such that
  $$ \sum_{j=1}^M i_j j = \beta \quad \text{and} \quad \sum_{j=1}^M i_j = \gamma. $$
  Such choice of $i_1, \dots, i_M$ always exists: letting $\beta^\prime := \beta \mod M$, if $\beta^\prime \neq 0$ one can choose $i_M = \gamma-1$, $i_{\beta^\prime} = 1$ and $i_j = 0$ for $j \neq M, \beta^\prime$. If $\beta^\prime = 0$, one can instead choose $i_M = \gamma$ and $i_j = 0$ for $j \neq M$.\\
  Consider now the function
  $$ f := \alpha^k \tilde{\theta_0}^\ell \prod_{j=1}^M \theta_j^{i_j}. $$
  From \eqref{eq:divisor_alpha}, \eqref{eq:divisor_thetak} and Lemma \ref{lemma:divisor_thetatilde}, the principal divisor of $f$ is
  \begin{alignat*}{2}
    (f) & = && k(q^n+1)(Q_1 - P_1) + \ell ( m Q_1 + E - mq P_1) + \\
    &   && \sum_{j=1}^M i_j \left( j \sum_{i = 1}^{q^3-q} Q_i + (m - j(q^2-q)) \sum_{i=2}^{q+1} P_i - (mq + j(q^2-q)) P_1 \right) \\
    & = && \left( k(q^n+1) + \ell m + \sum_{j=1}^M i_j j \right) Q_1 + E^\prime - \\
    &   && \left( k(q^n+1) + \left( \sum_{j=1}^M i_j + \ell \right) mq + \sum_{j=1}^M i_j j (q^2-q) \right) P_1 \\
    & = && -i Q_1 + E^\prime - \tilde{\tau}(i) P_1.
  \end{alignat*}
  where $E$ and $E^\prime$ are effective divisors whose supports do not contain $Q_1$ and $P_1$. The above computation shows that $(i, \tilde{\tau}(i))$ belongs to $H(Q_1, P_1)$ and thus $\tilde{\tau}(i) \geq \tau_{Q_1, P_1}(i)$ by definition of $\tau_{Q_1, P_1}$.

  Finally, we can use Lemma \ref{prop:properties_tau} \textit{d)} to show that the equality $\tilde{\tau}(i) = \tau_{Q_1, P_1}(i)$ holds. In fact, we have just proved that $\tilde{\tau}(i) \geq \tau_{Q_1, P_1}(i)$ for all $i \in \mathbb{Z}$ and therefore
  \begin{equation}
    \label{eq:sum_equal_pig}
    \sum_{i=c}^{\pi+c-1} (i + \tilde{\tau}(i)) \geq \sum_{i=c}^{\pi+c-1} (i + \tau_{Q, P}(i)) = \pi g
  \end{equation}
  for all $c \in \mathbb{Z}$. To conclude, it is enough to check that the left side of equation \eqref{eq:sum_equal_pig} is equal to $\pi g$. We can choose $c = -\pi + 1$ without loss of generality, so that
  \begin{equation}
    \label{eq:sum_i_tilde_i}
    \sum_{i=-\pi+1}^0 (i + \tilde{\tau}(i)) = \sum_{\beta=0}^{m-1} \sum_{\ell=0}^q (-m \ell - \beta + (\gamma + \ell)mq + \beta(q^2-q)).
  \end{equation}
  Writing $\gamma = \frac{1}{M}(\beta + (M-\beta) \, \mathrm{mod} \, M)$, the quantity on the right side of equation \eqref{eq:sum_i_tilde_i} yields
  \begin{align*}
    & - \frac{m^2 q(q+1)}{2} - \frac{m(m-1)(q+1)}{2} + \frac{m^2(m-1)q(q+1)}{2M} + \\
    & \frac{mq^2 (q^2-1)(M-1)}{2} + \frac{m^2 q^2 (q+1)}{2} + \frac{m(m-1)q(q^2-1)}{2}.
  \end{align*}
  It can be checked with a direct computation that the above quantity is equal to $\frac{1}{2}(q^n+1)(q-1)(q^{n+1}+q^n-q^2) = \pi g$.
\end{proof}

\begin{figure}[ht!]
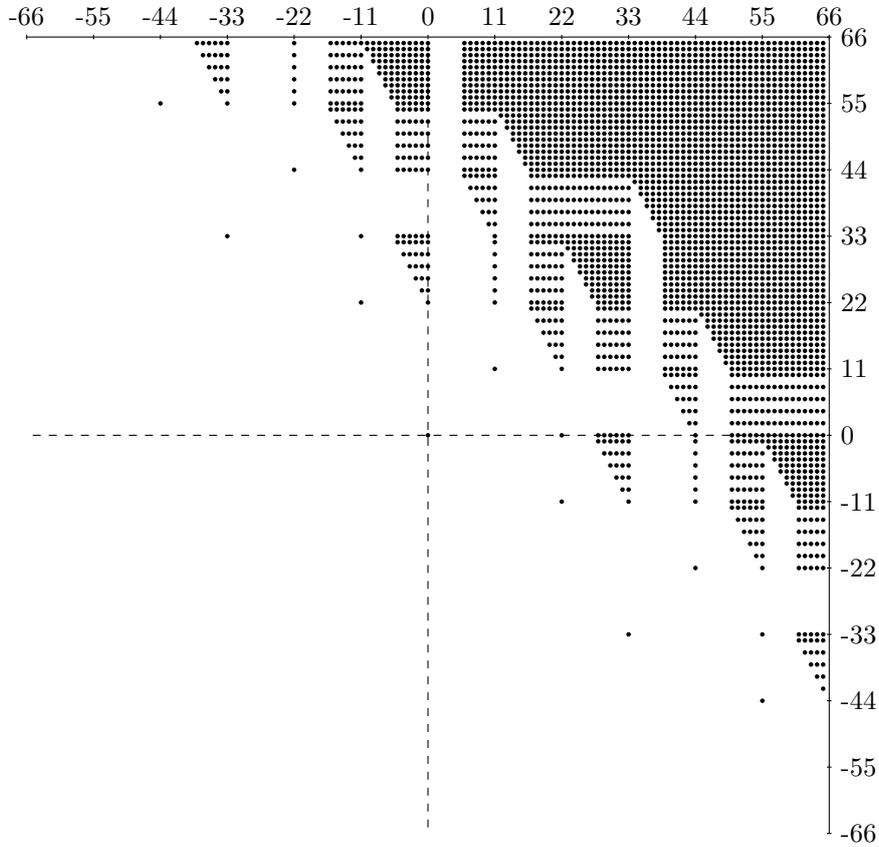

  \centering

  \caption{The two-point Weierstrass semigroup $H(Q_1, P_1)$ of $\mathcal{BM}_n$ for $q=2$ and $n=5$, of period $\pi=33$. Only the pairs $(i, j) \in H(Q_1, P_1)$ with $-2 \pi < i, j < 2 \pi$ are represented.}
\end{figure}

\section{Computation of the order bound and results}
\label{section:computation}

We are now ready to compute the order bound for dual codes of two-point AG codes from the $\mathcal{BM}_n$ curve, for all $n \geq 3$ odd. Let $G = aQ_1 + bP_1$ and denote with $\delta := a+b$ its degree. We define the divisor $D$ to be the sum of all the $\mathbb{F}_{q^{2n}}$-rational points of $\mathcal{BM}_n$ different from $Q_1$ and $P_1$. The degree of $D$ is therefore $\mathrm{deg}(D) = N_n - 2$, where $N_n = q^{2n+2} - q^{n+3} + q^{n+2} + 1$ is the number of $\mathbb{F}_{q^{2n}}$-rational points of $\mathcal{BM}_n$. The two-point AG code $C_L(D, G)$ and its dual $C_L(D, G)^\perp $ are linear subspaces of $\mathbb{F}_{q^{2n}}^{N_n-2}$.

If $\delta \geq N_n + 2g - 3$, then the code $C_L(D, G)^\perp$ is the zero code; this follows from the fact that, if $\delta \geq N_n + 2g - 3$, the divisors $G$ and $G-D$ are non-special, as their degrees exceed $2g - 2$ and, from the Riemann-Roch theorem, $\mathrm{dim}(C_L(D, G)) = \mathrm{dim}(L(G)) - \mathrm{dim}(L(G-D)) = N_n - 2$.

Define $\Delta := 4g - 1$. As pointed out in Section \ref{section:preliminary_results}, it is sufficient to determine the order bound for the code $C_L(D, G)^\perp$ for the case $\delta < \Delta$ only, since the order bound coincides with the Goppa bound if the degree of $G$ is larger than or equal to $\Delta$ (see Lemma \ref{lem:4g_minus_1}). The condition $\delta < \Delta$, which also implies $\mathrm{deg}(G) < \mathrm{deg}(D)$, makes the determination of the dimension of $C_L(D, G)^\perp$ a particularly easy task; in fact
$$ \mathrm{dim}(C_L(D, G)^\perp) = N_n - 2 - \mathrm{dim}(L(G)). $$
The dimension of $L(G)$ can be conveniently computed applying Theorem \ref{theorem:dim_LG} with the map $\tau_{Q_1, P_1}$ defined in Theorem \ref{theorem:tau_BM}.

The algorithm we propose for computing the order bound for $C_L(D, G)^\perp$ is inspired by \cite{TwoPointCodes}*{Algorithm 1} and takes into account the observations above. Similarly to \cite{TwoPointCodes}, we recursively obtain a bound for the minimum distance of the code $C_L(D, G)^\perp$ by successive iterations on the degree $\delta$ of $G$, starting from $\delta = \Delta-1$ and decreasing $\delta$ by $1$ at each round of the procedure until $\delta = 0$. Observe that it is easy to check if
\begin{equation}
  \label{eq:check_dimensions}
  \mathrm{dim}(L(a Q_1 + b P_1)) \neq \mathrm{dim}(L((a+1) Q_1 + b P_1)),
\end{equation}
since from Theorem \ref{theorem:dim_LG}, \eqref{eq:check_dimensions} holds if and only if $\tau_{Q_1, P_1}(a+1) \leq b$. Similarly, the inequality $\mathrm{dim}(L(a Q_1 + b P_1)) \neq \mathrm{dim}(L(a Q_1 + (b+1) P_1))$ holds if and only if $\tau_{Q_1, P_1}^{-1}(b+1) \leq a$. Note that $\tau_{Q_1, P_1}^{-1}(b+1)$ can be computed using Proposition \ref{prop:tau_inverse}.

\begin{algorithm}
  \label{algorithm:main}
  Input: a prime power $q$ and an odd integer $n \geq 3$. \\
  Output: a table $T$ whose rows consist of three cells: the first cell contains an integer $k$ representing the dimension of a code $C_L(D, a Q_1 + b P_1)^\perp$; the second cell contains a pair of integers $(a, b)$ such that $d(a Q_1 + b P_1) \geq d(a^\prime Q_1 + b^\prime P_1)$ for all codes $C_L(D, a^\prime Q_1 + b^\prime P_1)^\perp$ of dimension $k$; the third cell contains $d(a Q_1 + b P_1)$.
  \begin{enumerate}
  \item Initialize an empty table $T$.
  \item Define $g := \frac{1}{2} (q-1)(q^{n+1}+q^n-q^2)$ and $\Delta := 4g-1$.
  \item Construct an upper-left triangular matrix $A$ of size $(\Delta+1) \times (\Delta+1)$ and set $A[a, \Delta-a] = \Delta-2g+2$ for $a = 0, \dots, \Delta$.
  \item Define $\delta := \Delta-1$.
  \item For $a = 0, \dots, \delta$, define $b := \delta - a$ and
    \begin{align*}
      d_{Q_1} & := \begin{cases}
        \min \{ \nu(Q_1; 0, a Q_1 + b P_1), A[a+1, b] \} & \text{if} \; \tau_{Q_1, P_1}(a+1) \leq b, \\
        A[a+1, b] & \text{otherwise}, \\
      \end{cases} \\
      d_{P_1} & := \begin{cases}
        \min \{ \nu(P_1; 0, a Q_1 + b P_1), A[a, b+1] \} & \text{if} \; \tau_{Q_1, P_1}^{-1}(b+1) \leq a, \\
        A[a, b+1] & \text{otherwise}, \\
      \end{cases} \\
      d & := \max \{ d_{Q_1}, d_{P_1} \}.
    \end{align*}
  \item Compute $k := \mathrm{dim}(C_L(D, aQ_1 + bP_1)^\perp)$.
  \item Check if a row with value $k$ in the first cell exists in the table $T$.
    \begin{enumerate}
    \item If such row does not exist, add a new row to $T$ with $k$ in the first cell, $(a, b)$ in the second cell, $d$ in the third cell.
    \item If such row exists and $d$ is strictly larger than the value in the third cell, update the row by overwriting the pair in the second cell with $(a, b)$ and the value in the third cell with $d$.
    \item If such row exists and $d$ is not larger than the value in the third cell, do nothing.
    \end{enumerate}
  \item Redefine $\delta := \delta - 1$ and repeat the procedure from step 5 until $\delta = 0$.
  \end{enumerate}
\end{algorithm}

The table $T$ in output of Algorithm \ref{algorithm:main} stores the information on possible improvements on the minimum distance of codes $C_L(D, G)^\perp$ over the Goppa bound. Note for example that the improvements obtained for $q=2, n=3$ and $q=3, n=3$ are identical to the ones obtained in \cite{TwoPointCodes} (the case $q=2, n=3$ is summarized in Table \ref{table:q2n3}, compare with \cite{TwoPointCodes}*{Table 1}); on one hand this should not surprise, since $\mathcal{BM}_3$ and $GGS_3$ are isomorphic, but on the other hand it is interesting to see that our definition of order bound, which is slightly weaker than the one given in \cite{TwoPointCodes}, does not affect the estimate for the minimum distance in these particular cases.

\begin{table}[h!]
  \centering
  \begin{tabular}{|c|c|c||c|c|c||c|c|c|}
    \hline
    $k$ & $(a, b)$ & $d$ & $k$ & $(a, b)$ & $d$ & $k$ & $(a, b)$ & $d$ \\
    \hline
    195 & (0, 37) & 20 & 205 & (1, 26) & 11 & 215 & (1, 16) & 4 \\
    196 & (1, 35) & 19 & 206 & (1, 25) & 10 & 216 & (7, 8) & 4 \\
    197 & (1, 34) & 18 & 207 & (1, 24) & 9 & 217 & (1, 14) & 3 \\
    198 & (1, 33) & 17 & 208 & (1, 23) & 9 & 218 & (1, 13) & 3 \\
    199 & (1, 32) & 16 & 209 & (1, 22) & 8 & 219 & (1, 11) & 3 \\
    200 & (1, 31) & 15 & 210 & (0, 22) & 6 & 220 & (4, 7) & 2 \\
    201 & (0, 31) & 14 & 211 & (0, 21) & 6 & 221 & (2, 7) & 2 \\
    202 & (1, 29) & 13 & 212 & (0, 20) & 6 & 222 & (2, 5) & 2 \\
    203 & (4, 25) & 13 & 213 & (0, 19) & 6 & & & \\
    204 & (0, 28) & 12 & 214 & (1, 17) & 5 & & & \\
    \hline
  \end{tabular}
  \caption{Table $T$ obtained from Algorithm \ref{algorithm:main} with $q=2$, $n=3$ and code length $N_3-2 = 223$.}
  \label{table:q2n3}
\end{table}

We also compared our results obtained using Algorithm \ref{algorithm:main} with the results obtained using \cite{TwoPointCodes}*{Algorithm 1} for $q=2$ and $n=5$. In this case the two curves $\mathcal{BM}_5$ and $GGS_5$ are not isomorphic. Table \ref{table:q2n5} summarizes the cases where our results improve those from \cite{TwoPointCodes}.

\begin{table}[h!]
  \centering
  \begin{tabular}{|c|c|c|c||c|c|c|c|}
    \hline
    $k$ & $(a, b)$ & $d$ & $d_2$ & $k$ & $(a, b)$ & $d$ & $d_2$\\
    \hline
    3875 & (5, 132) & 52 & 51 & 3920 & (5, 87) & 17 & 16 \\
    3876 & (5, 131) & 51 & 50 & 3926 & (15, 71) & 14 & 12 \\
    3878 & (5, 129) & 49 & 48 & 3927 & (15, 70) & 14 & 12 \\
    3880 & (5, 127) & 47 & 46 & 3928 & (15, 69) & 13 & 11 \\
    3904 & (0, 108) & 28 & 27 & 3929 & (15, 68) & 13 & 11 \\
    3909 & (5, 98) & 23 & 22 & 3930 & (14, 68) & 12 & 11 \\
    3917 & (5, 90) & 19 & 18 & 3934 & (1, 77) & 8 & 7 \\
    \hline
  \end{tabular}
  \caption{For $q=2$, $n=5$, Table \ref{table:q2n5} reports the largest estimate for the minimum distance $d$ of a code $C_L(D, aQ_1 + bP_1)^\perp$ of length $N_5-2 = 3967$ and dimension $k$ and compares $d$ with $d_2$, the largest estimate obtained in \cite{TwoPointCodes} for codes of same length and dimension. Only the cases where $d > d_2$ are reported.}
  \label{table:q2n5}
\end{table}

\begin{bibdiv}
  \begin{biblist}

    \bib{TwoPointCodes}{article}{
      author={Barelli, E.},
      author={Beelen, P.},
      author={Datta, M.},
      author={Neiger, V.},
      author={Rosenkilde, J.},
      title={Two-point codes for the generalized GK curve},
      journal={IEEE Transactions on Information Theory},
      volume={64},
      pages={6268-6276},
      date={2018}
    }

    \bib{OrderBound}{article}{
      author={Beelen, P.},
      title={The order bound for general algebraic geometric codes},
      journal={Finite Fields and Their Applications},
      volume={13},
      pages={665-680},
      date={2007}
    }

    \bib{ANewFamily}{article}{
      author={Beelen, P.},
      author={Montanucci, M.},
      title={A new family of maximal curves},
      journal={Journal of the London Mathematical Society},
      volume={98},
      pages={573-592},
      date={2018}
    }

    \bib{GeneralizationWeierstrassSemigroup}{article}{
      author={Beelen, P.},
      author={Tutas, N.},
      title={A generalization of the Weierstrass semigroup},
      journal={Journal of Pure and Applied Algebra},
      volume={207},
      pages={243-260},
      date={2006}
    }

    \bib{TwoPointsGK}{article}{
      author={Castellanos, A. S.},
      author={Tizziotti, G. C.},
      title={Two-Point AG Codes on the GK Maximal Curves},
      journal={IEEE Transactions on Information Theory},
      volume={62},
      pages={681-686},
      date={2016}
    }

    \bib{DuursmaKirov}{article}{
      author={Duursma, I. M.},
      author={Kirov, R.},
      title={Improved Two-Point Codes on Hermitian Curves},
      journal={IEEE Transactions on Information Theory},
      volume={57},
      pages={4469-4476},
      date={2011}
    }

    \bib{GGSCurve}{article}{
      author={Garcia, A.},
      author={G\"{u}neri, C.},
      author={Stichtenoth, H.},
      title={A generalization of the Giulietti-Korchm\'{a}ros maximal curve},
      journal={Advances in Geometry},
      volume={10},
      pages={427-434},
      date={2010}
    }

    \bib{ConsecutiveGaps}{article}{
      author={Garcia, A.},
      author={Kim, S. J.},
      author={Lax, R.},
      title={Consecutive Weierstrass gaps and minimum distance of Goppa codes},
      journal={Journal of Pure and Applied Algebra},
      volume={84},
      pages={199-207},
      date={1993}
    }

    \bib{GKCurve}{article}{
      author={Giulietti, M.},
      author={Korchm\'{a}ros, G.},
      title={A new family of maximal curves over a finite field},
      journal={Mathematische Annalen},
      volume={343},
      pages={229-245},
      date={2009}
    }

    \bib{Goppa}{book}{
      author={Goppa, V. D.},
      title={Geometry and Codes},
      series={Mathematics and its Applications (Soviet Series)},
      volume={24},
      publisher={Kluwer Academic Publishers Group},
      address={Dordrecht},
      date={1988}
    }

    \bib{GoppaBound}{article}{
      author={Goppa, V. D.},
      title={Algebraico-geometric codes},
      journal={Mathematics of the Ussr-izvestiya},
      volume={21},
      pages={75-91},
      date={1983}
    }

    \bib{HommaKim}{article}{
      author={Homma, M.},
      author={Kim, S. J.},
      title={The complete determination of the minimum distance of two-point codes on a Hermitian curve},
      journal={Designs, Codes and Cryptography},
      volume={40},
      pages={5-24},
      date={2006}
    }
    
    \bib{Matthews}{article}{
      author={Matthews, G. L.},
      title={Weierstrass Pairs and Minimum Distance of Goppa Codes},
      journal={Designs, Codes and Cryptography},
      volume={22},
      pages={107-121},
      date={2001}
    }

    \bib{TwoPointsSuzuki}{article}{
      author={Matthews, G. L.},
      title={Codes From the Suzuki Function Field},
      journal={IEEE Transactions On Information Theory},
      volume={50},
      pages={3298-3302},
      date={2004}
    }

    \bib{AGcodes}{article}{
      author={Montanucci, M.},
      author={Pallozzi Lavorante, V.},
      title={AG codes from the second generalization of the GK maximal curve},
      journal={Discrete Mathematics},
      volume={343},
      pages={111810},
      date={2020}
    }

    \bib{Park}{article}{
      author={Park, S.},
      title={Minimum distance of Hermitian two-point codes},
      journal={Designs, Codes and Cryptography},
      volume={57},
      pages={195-213},
      date={2010}
    }
    
    \bib{Stichtenoth}{book}{
      author={Stichtenoth, H.},
      title={Algebraic function fields and codes},
      series={Graduate Texts in Mathematics},
      volume={254},
      publisher={Springer},
      address={Berlin},
      date={2009}
    }
    
  \end{biblist}
\end{bibdiv}

\end{document}